\newtheorem{theorem}{Theorem}[section]
\newtheorem{lemma}[theorem]{Lemma}
\newtheorem{corollary}[theorem]{Corollary}
\theoremstyle{definition}
\newtheorem{definition}[theorem]{Definition}
\newtheorem{example}[theorem]{Example}
\theoremstyle{remark}
\newtheorem{remark}[theorem]{Remark}
\begin{document}

\title[Hermite-Hadamard's Mid-Point Type Inequalities for Generalized...]{Hermite-Hadamard's Mid-Point Type Inequalities for Generalized Fractional Integrals}

\author[M. Rostamian Delavar]{M. Rostamian Delavar}
\address{Department of Mathematics, Faculty of Basic Sciences, University of Bojnord, P. O. Box 1339, Bojnord 94531, Iran}
\email{\textcolor[rgb]{0.00,0.00,0.84}{m.rostamian@ub.ac.ir}}


\subjclass[2010]{26A33, 26A51, 26D10, 26D15.}

\keywords{Fractional integrals, Hermite-Hadamard inequality, Mid-point type inequalities, Lipschitzian mappings, Convex mappings, Special means.}

\begin{abstract}
Some Hermite-Hadamard's mid-point type inequalities related to Katugampola fractional integrals are obtained where the first derivative of considered mappings is Lipschitzian or convex. Also some mid-point type inequalities are given for Lipschitzian mappings, with the aim of generalizing the results presented in previous works. Finally as an application, some generalized inequalities in connection with special means are provided.
\end{abstract}

\maketitle

\section{Introduction}
Recently U. N. Katugampola in \cite{katu1}, introduced an Erd\'elyi-Kober type fractional integral operator which now is known as Katugampola fractional integral. The Katugampola fractional integral is a generalization of Riemann-Liouville and Hadamard fractional integrals simultaneously. Let's review these concepts.\\
\noindent The following definition is modified version of Definition 4.3 in \cite{katu1}.
\begin{definition}\cite{katu3, katu2}
Let $[a,b]\subset \mathbb{R}$ be a finite interval. The left and right side Katugampola fractional integrals of order $\alpha>0$ are defined respectively by
\begin{align*}
{^\rho I_{a^{+}}^{\alpha}} f(x)=\frac{\rho^{1-\alpha}}{\Gamma({\alpha})}\int_a^x\frac{t^{\rho-1}}{(x^{\rho}-t^{\rho})^{1-\alpha}}f(t)dt,
\end{align*}
and
\begin{align*}
{^\rho I_{b^{-}}^{\alpha}} f(x)=\frac{\rho^{1-\alpha}}{\Gamma({\alpha})}\int_x^b\frac{t^{\rho-1}}{(t^{\rho}-x^{\rho})^{1-\alpha}}f(t)dt,
\end{align*}
where $a<x<b$, $\rho>0$, $\Gamma(\alpha)$ is Gamma function and the integrals exist.
\end{definition}
\noindent The relation between Riemann-Liouville fractional integrals i.e.,
\begin{align*}
 J_{a^{+}}^{\alpha} f(x)=\frac{1}{\Gamma(\alpha)}\int_a^x (x-t)^{\alpha-1}f(t)dt~~\quad \text{~and~} ~~\quad J_{b^{-}}^{\alpha} f(x)=
 \frac{1}{\Gamma({\alpha})}\int_x^b(t-x)^{\alpha-1}f(t)dt,
\end{align*}
and Hadamard fractional integrals i.e.,
\begin{align*}
 H_{a^{+}}^{\alpha} f(x)=\frac{1}{\Gamma(\alpha)}\int_a^x \Big(\ln \frac{x}{t}\Big)^{\alpha-1}f(t)dt~~\quad \text{~and~} ~~\quad H_{b^{-}}^{\alpha} f(x)=
 \frac{1}{\Gamma({\alpha})}\int_x^b\Big(\ln \frac{x}{t}\Big)^{\alpha-1}f(t)dt,
\end{align*}
has been shown in the following result:
\begin{theorem}\cite{katu3,katu2}
Let $\alpha>0$ and $\rho>0$. Then for $x>a$\\

\rm{(a)} $\lim_{\rho\to 1} {^\rho I_{a^{+}}^{\alpha}} f(x)= J_{a^{+}}^{\alpha} f(x)$ and $\lim_{\rho\to 1} {^\rho I_{b^{-}}^{\alpha}} f(x)= J_{b^{-}}^{\alpha} f(x)$,\\

\rm{(b)} $\lim_{\rho\to 0^{+}} {^\rho I_{a^{+}}^{\alpha}} f(x)= H_{a^{+}}^{\alpha} f(x)$ and $\lim_{\rho\to 0^{+}} {^\rho I_{b^{-}}^{\alpha}} f(x)= H_{b^{-}}^{\alpha} f(x)$.
\end{theorem}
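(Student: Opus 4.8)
The plan is to obtain both limits by combining pointwise convergence of the integrand with the dominated convergence theorem, working on the fixed interval $[a,x]$; the right‑sided statements follow by exactly the same argument with $t^{\rho}-x^{\rho}$ (resp.\ $\ln\frac{t}{x}$) in place of $x^{\rho}-t^{\rho}$ (resp.\ $\ln\frac{x}{t}$), so I only treat the left‑sided ones. Assume $0<a<x<b$ and that $f$ is regular enough for the manipulations to be legitimate, say $f$ continuous on $[a,b]$, so that $\|f\|_{\infty}<\infty$. Everything rests on the elementary identity
\[
x^{\rho}-t^{\rho}=\rho\int_{t}^{x}s^{\rho-1}\,ds=\rho\,\xi^{\rho-1}(x-t)\qquad\text{for some }\xi=\xi(\rho,t)\in(t,x),
\]
which simultaneously exhibits the $\rho\to 0^{+}$ asymptotics of $x^{\rho}-t^{\rho}$ and supplies a $\rho$‑uniform majorant near the endpoint $t=x$.

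\noindent For part (a), fix $\rho$ in a bounded neighbourhood of $1$, say $\rho\in[1/2,3/2]$. For each $t\in(a,x)$ the integrand
\[
g_{\rho}(t)=\frac{\rho^{1-\alpha}\,t^{\rho-1}}{(x^{\rho}-t^{\rho})^{1-\alpha}}\,f(t)
\]
tends to $(x-t)^{\alpha-1}f(t)$ as $\rho\to 1$, since $\rho^{1-\alpha}\to1$, $t^{\rho-1}\to1$ and $x^{\rho}-t^{\rho}\to x-t$. Inserting $(x^{\rho}-t^{\rho})^{1-\alpha}=\rho^{1-\alpha}\xi^{(\rho-1)(1-\alpha)}(x-t)^{1-\alpha}$ from the identity gives
\[
|g_{\rho}(t)|=\frac{t^{\rho-1}\,\xi^{(\rho-1)(\alpha-1)}}{(x-t)^{1-\alpha}}\,|f(t)|\le C\,\|f\|_{\infty}\,(x-t)^{\alpha-1},
\]
where $C=C(a,x,\alpha)$ bounds $t^{\rho-1}\xi^{(\rho-1)(\alpha-1)}$ over $t,\xi\in[a,x]$ and $\rho\in[1/2,3/2]$. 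Because $\alpha>0$, the function $(x-t)^{\alpha-1}$ is integrable on $[a,x]$, so dominated convergence yields $\lim_{\rho\to1}{^\rho I_{a^{+}}^{\alpha}}f(x)=\frac{1}{\Gamma(\alpha)}\int_{a}^{x}(x-t)^{\alpha-1}f(t)\,dt=J_{a^{+}}^{\alpha}f(x)$.

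\noindent For part (b) one must not send the factors to their limits one at a time, since $\rho^{1-\alpha}\to0$ (if $\alpha<1$) or $\to\infty$ (if $\alpha>1$) while $(x^{\rho}-t^{\rho})^{1-\alpha}\to0$; the correct regrouping is
\[
g_{\rho}(t)=\Bigl(\frac{\rho}{x^{\rho}-t^{\rho}}\Bigr)^{1-\alpha}t^{\rho-1}f(t).
\]
From $\frac{x^{\rho}-t^{\rho}}{\rho}=\frac{e^{\rho\ln x}-e^{\rho\ln t}}{\rho}\to\ln x-\ln t=\ln\frac{x}{t}$ as $\rho\to 0^{+}$ and $t^{\rho-1}\to t^{-1}$, the integrand converges pointwise on $(a,x)$ to $t^{-1}\bigl(\ln\frac{x}{t}\bigr)^{\alpha-1}f(t)$. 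For the majorant, restrict to $\rho\in(0,1]$; then $s\mapsto s^{\rho-1}$ is nonincreasing on $[t,x]$, so $(x-t)x^{\rho-1}\le\frac{x^{\rho}-t^{\rho}}{\rho}\le(x-t)t^{\rho-1}$, and since $x^{\rho-1}\ge\min(1,x^{-1})$ and $t^{\rho-1}\le\max(1,a^{-1})$ uniformly in $\rho\in(0,1]$, the factor $\bigl(\frac{\rho}{x^{\rho}-t^{\rho}}\bigr)^{1-\alpha}$ is dominated, uniformly in $\rho$, by $C'(x-t)^{\alpha-1}$ when $0<\alpha<1$ and by a constant when $\alpha\ge1$. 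In either case $|g_{\rho}(t)|\le C''\|f\|_{\infty}\bigl((x-t)^{\alpha-1}+1\bigr)$ is integrable on $[a,x]$, and dominated convergence gives $\lim_{\rho\to0^{+}}{^\rho I_{a^{+}}^{\alpha}}f(x)=\frac{1}{\Gamma(\alpha)}\int_{a}^{x}\bigl(\ln\frac{x}{t}\bigr)^{\alpha-1}\frac{f(t)}{t}\,dt$, i.e.\ the Hadamard fractional integral $H_{a^{+}}^{\alpha}f(x)$ (in its standard form, carrying the weight $\frac{dt}{t}$).

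\noindent The step I expect to be the main obstacle is precisely this passage to the limit under the integral sign: in part (b) one has to notice that $\rho^{1-\alpha}$ must be paired with $(x^{\rho}-t^{\rho})^{-(1-\alpha)}$ before any limit is taken, and in both parts one must produce a majorant that is uniform in $\rho$ and absorbs the endpoint singularity at $t=x$ occurring when $0<\alpha<1$. With the identity $x^{\rho}-t^{\rho}=\rho\int_{t}^{x}s^{\rho-1}\,ds$ in hand, both points reduce to the monotonicity of $s\mapsto s^{\rho-1}$ and to the boundedness of powers of $t$ on the compact interval $[a,x]$, after which the argument is routine.
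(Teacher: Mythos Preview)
The paper does not supply its own proof of this theorem: it is quoted verbatim from \cite{katu3,katu2} as a known result and is used only as background. So there is no proof in the paper to compare against.

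Your argument is correct in substance. The identity $x^{\rho}-t^{\rho}=\rho\int_{t}^{x}s^{\rho-1}\,ds$ together with the monotonicity of $s\mapsto s^{\rho-1}$ is exactly what one needs to manufacture a $\rho$-uniform majorant of the form $C(x-t)^{\alpha-1}$ near $t=x$, and your regrouping $\rho^{1-\alpha}(x^{\rho}-t^{\rho})^{\alpha-1}=\bigl(\rho/(x^{\rho}-t^{\rho})\bigr)^{1-\alpha}$ in part~(b) is the essential observation. You also correctly identify that the limit in part~(b) carries the Haar weight $dt/t$, so that the Hadamard integral one actually obtains is
\[
\frac{1}{\Gamma(\alpha)}\int_{a}^{x}\Bigl(\ln\frac{x}{t}\Bigr)^{\alpha-1}\frac{f(t)}{t}\,dt,
\]
which is the standard definition; the display in the paper omits the factor $1/t$, an apparent misprint. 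Two small points worth tightening: your hypothesis $f$ continuous (hence bounded) on $[a,b]$ is stronger than strictly necessary but perfectly adequate for a self-contained proof, and the assumption $a>0$ is genuinely needed for part~(b) since $\ln(x/t)$ and $t^{-1}$ are not integrable down to $0$; you state it but might flag explicitly that the Hadamard limit fails if $a=0$.
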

\noindent ّFor basic and fundamental information about fractional integrals and operators we refer an interested reader to \cite{goma, kilbas, kirya, samko}.\\
In \cite{katu3}, the authors obtained two important inequalities in connection with Hermite-Hadamard inequality and Katugampola fractional integrals. The first is Hermite-Hadamard type inequality related to Katugampola fractional integrals:
\begin{theorem}
Let $\alpha>0$ and $\rho>0$. Let $f:[a^{\rho},b^{\rho}]\to \mathbb{R}$ be a positive function with $0\leq a<b$ and $f\in X_c^p(a^{\rho},b^{\rho})$. If $f$ is also a convex function on $[a,b]$,
then the following inequalities hold:
\begin{align}\label{eq.-10}
&f\Big(\dfrac{a^{\rho}+b^{\rho}}{2}\Big)\leq\frac{\alpha{\rho}^{\alpha}\Gamma(\alpha +1)}{2(b^{\rho}-a^{\rho})^{\alpha}}\big[{^\rho I_{a^{+}}^{\alpha}} f(b^\rho )+{^\rho I_{b^{-}}^{\alpha}} f(a^\rho )\big]\leq \dfrac{f(a^{\rho})+f(b^{\rho})}{2}.
\end{align}
\end{theorem}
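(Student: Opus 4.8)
The plan is to follow the classical proof of the Hermite--Hadamard inequality, replacing the uniform weight by the weight $t^{\alpha\rho-1}$, which is precisely what reproduces the Katugampola kernel after a change of variable. For brevity put $A=a^{\rho}$ and $B=b^{\rho}$; the hypothesis $f\in X_c^p(a^{\rho},b^{\rho})$ guarantees that every integral below (including the improper ones occurring when $0<\alpha<1$) is finite.

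For the left inequality, since $f$ is convex on $[A,B]$ and, for each $t\in[0,1]$, the points $t^{\rho}A+(1-t^{\rho})B$ and $(1-t^{\rho})A+t^{\rho}B$ have midpoint $(A+B)/2$, convexity yields
\begin{align*}
2f\Big(\frac{A+B}{2}\Big)\le f\big(t^{\rho}A+(1-t^{\rho})B\big)+f\big((1-t^{\rho})A+t^{\rho}B\big).
\end{align*}
Multiply by $t^{\alpha\rho-1}$ and integrate over $t\in[0,1]$. The left side becomes $\frac{2}{\alpha\rho}f\big(\frac{A+B}{2}\big)$. On the right side, the substitution $w=t^{\rho}$ turns $\int_{0}^{1}t^{\alpha\rho-1}f\big(t^{\rho}A+(1-t^{\rho})B\big)\,dt$ into $\frac{1}{\rho}\int_{0}^{1}w^{\alpha-1}f\big(wA+(1-w)B\big)\,dw$, and the further linear substitution $u=wA+(1-w)B$ turns this into a constant times ${^\rho I_{a^{+}}^{\alpha}}f(b^{\rho})$; the companion term $\int_{0}^{1}t^{\alpha\rho-1}f\big((1-t^{\rho})A+t^{\rho}B\big)\,dt$ produces ${^\rho I_{b^{-}}^{\alpha}}f(a^{\rho})$ in the same way. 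Reassembling the constants gives the first inequality.

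For the right inequality, apply convexity directly: $f\big(t^{\rho}A+(1-t^{\rho})B\big)\le t^{\rho}f(A)+(1-t^{\rho})f(B)$, together with the analogous bound for $(1-t^{\rho})A+t^{\rho}B$. Adding the two, the weights $t^{\rho}$ and $1-t^{\rho}$ combine so that the sum is bounded above by $f(A)+f(B)$. Multiplying again by $t^{\alpha\rho-1}$ and integrating over $[0,1]$, the left side reproduces the same constant times ${^\rho I_{a^{+}}^{\alpha}}f(b^{\rho})+{^\rho I_{b^{-}}^{\alpha}}f(a^{\rho})$ as above, while the right side equals $\big(f(A)+f(B)\big)\int_{0}^{1}t^{\alpha\rho-1}\,dt=\frac{f(A)+f(B)}{\alpha\rho}$. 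Dividing out the common factor finishes the proof.

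The one step that needs genuine care is the change-of-variables bookkeeping: one must verify that the composition $t\mapsto w=t^{\rho}\mapsto u$ carries the Beta-type weighted integral exactly onto the kernel $t^{\rho-1}(b^{\rho}-t^{\rho})^{\alpha-1}$, and that the accumulated factors $\rho^{1-\alpha}/\Gamma(\alpha)$, $(b^{\rho}-a^{\rho})^{-\alpha}$ and $\Gamma(\alpha+1)$ assemble into the constant stated in the theorem. This can also be bypassed: the substitution $u=t^{\rho}$ in the definitions shows that ${^\rho I_{a^{+}}^{\alpha}}f(b^{\rho})$ and ${^\rho I_{b^{-}}^{\alpha}}f(a^{\rho})$ equal $\rho^{-\alpha}$ times the Riemann--Liouville integrals $J_{c^{+}}^{\alpha}f(d)$ and $J_{d^{-}}^{\alpha}f(c)$ with $c=a^{\rho}$, $d=b^{\rho}$, whence the claim reduces to the classical Riemann--Liouville Hermite--Hadamard inequality on $[a^{\rho},b^{\rho}]$.
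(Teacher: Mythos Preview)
The paper does not supply its own proof of this theorem; it is quoted from Chen and Katugampola \cite{katu3} as background for the new results in Section~2. Your argument is correct and is essentially the standard one given there: use midpoint convexity for the left inequality, the defining inequality of convexity for the right, multiply by the weight $t^{\alpha\rho-1}$, integrate over $[0,1]$, and identify the resulting integrals with ${^\rho I_{a^{+}}^{\alpha}}f(b^{\rho})$ and ${^\rho I_{b^{-}}^{\alpha}}f(a^{\rho})$ via the change of variable $x^{\rho}=t^{\rho}a^{\rho}+(1-t^{\rho})b^{\rho}$. Your closing remark---that the substitution $u=t^{\rho}$ collapses the Katugampola integrals to $\rho^{-\alpha}$ times the Riemann--Liouville integrals on $[a^{\rho},b^{\rho}]$, so the whole statement is just \eqref{eq.--10} in disguise---is a tidy shortcut that bypasses the constant bookkeeping entirely.
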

\noindent Note that inequalities obtained in (\ref{eq.-10}), generalize the Hermite-Hadamard inequality related to Riemann-Lioville fractional integrals presented by M. Z. Sarikaya et al. \cite{sarikaya}(also see \cite{wang}):
\begin{align}\label{eq.--10}
&f\Big(\dfrac{a+b}{2}\Big)\leq\frac{\Gamma(\alpha +1)}{2(b-a)^{\alpha}}\big[ J_{a^{+}}^{\alpha} f(b )+J_{b^{-}}^{\alpha} f(a )\big]\leq \dfrac{f(a)+f(b)}{2}.
\end{align}
If in (\ref{eq.--10}) we consider $\alpha=1$, then we recapture classic Hermite-Hadamard inequality \cite{hadamard, hermite, mitri} for a convex function $f$ on $[a,b]$:
\begin{align*}
&f\Big(\dfrac{a+b}{2}\Big)\leq\frac{1}{b-a}\int_a^b f(x)dx\leq \dfrac{f(a)+f(b)}{2}.
\end{align*}
For more results about Hermite-Hadamard inequality and fractional integrals see \cite{chen, iscan, jleli, noor, rode, RD, set, wang} and references therein.\\
\noindent The second is the following inequality in connection with (\ref{eq.-10}):
\begin{theorem}\label{thm.000}
 Let $f:[a^{\rho},b^{\rho}]\to \mathbb{R}$ be a differentiable mapping on $(a^{\rho},b^{\rho})$ with $0\leq a<b$. If $|f'|$ is convex on $[a^{\rho},b^{\rho}]$, then the following inequality holds: \begin{align}\label{eq.-9}
&\bigg|\dfrac{f(a^{\rho})+f(b^{\rho})}{2}-\frac{\alpha{\rho}^{\alpha}\Gamma(\alpha +1)}{2(b^{\rho}-a^{\rho})^{\alpha}}\big[{^\rho I_{a^{+}}^{\alpha}} f(b^\rho )+{^\rho I_{b^{-}}^{\alpha}} f(a^\rho )\big]\bigg|\leq\dfrac{b^{\rho}-a^{\rho}}{2\rho(\alpha+1)}\big(1-\frac{1}{2^{\alpha}}\big)[|f'(a^{\rho})|+|f'(b^{\rho})|].
\end{align}
\end{theorem}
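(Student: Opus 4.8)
The plan is to derive (\ref{eq.-9}) from a Katugampola analogue of the Riemann-Liouville midpoint identity of \cite{sarikaya}. Concretely, the first and decisive step is to establish the representation
\begin{align*}
\dfrac{f(a^{\rho})+f(b^{\rho})}{2}&-\frac{\alpha{\rho}^{\alpha}\Gamma(\alpha +1)}{2(b^{\rho}-a^{\rho})^{\alpha}}\big[{^\rho I_{a^{+}}^{\alpha}} f(b^\rho )+{^\rho I_{b^{-}}^{\alpha}} f(a^\rho )\big]\\
&=\frac{b^{\rho}-a^{\rho}}{2\rho}\int_{0}^{1}\big[(1-t)^{\alpha}-t^{\alpha}\big]\,f'\big(ta^{\rho}+(1-t)b^{\rho}\big)\,dt,
\end{align*}
which plays the role of $\tfrac{b-a}{2}\int_{0}^{1}\big[(1-t)^{\alpha}-t^{\alpha}\big]f'\big(ta+(1-t)b\big)\,dt$ in the Riemann-Liouville setting. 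Granting this identity, the rest is routine: I would take absolute values in the identity, apply the triangle inequality to pass the modulus inside the integral, use the convexity of $|f'|$ to bound $|f'(ta^{\rho}+(1-t)b^{\rho})|\le t|f'(a^{\rho})|+(1-t)|f'(b^{\rho})|$, and then evaluate the two elementary weighted integrals that appear.

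To prove the identity I would start from its right-hand side and split the integral as $\int_{0}^{1}(1-t)^{\alpha}f'(\cdot)\,dt$ minus $\int_{0}^{1}t^{\alpha}f'(\cdot)\,dt$, integrating each by parts with $-\tfrac{1}{b^{\rho}-a^{\rho}}f(ta^{\rho}+(1-t)b^{\rho})$ as an antiderivative of $f'(ta^{\rho}+(1-t)b^{\rho})$. The boundary terms at $t=0$ and $t=1$ collapse (using $\alpha>0$) to $\tfrac12\big(f(a^{\rho})+f(b^{\rho})\big)$, while the two residual integrals $\int_{0}^{1}(1-t)^{\alpha-1}f(ta^{\rho}+(1-t)b^{\rho})\,dt$ and $\int_{0}^{1}t^{\alpha-1}f(ta^{\rho}+(1-t)b^{\rho})\,dt$ are transformed, via the substitution $u^{\rho}=ta^{\rho}+(1-t)b^{\rho}$ that reinstates the variable appearing in the Katugampola kernels, into constant multiples of ${^\rho I_{b^{-}}^{\alpha}} f(a^{\rho})$ and ${^\rho I_{a^{+}}^{\alpha}} f(b^{\rho})$ respectively. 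Reassembling and simplifying the Gamma factors through $\alpha\Gamma(\alpha)=\Gamma(\alpha+1)$, together with the powers of $\rho$ thrown off by that substitution, produces the displayed identity. I expect this last bookkeeping — keeping the $\rho$- and $\alpha$-dependent constants consistent with the normalization already fixed in (\ref{eq.-10}) — to be the only genuinely delicate point; differentiability of $f$ on $(a^{\rho},b^{\rho})$ and $\alpha>0$ are exactly what is needed for the integration by parts to be valid and for the boundary term at $t=1$ to vanish.

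For the concluding estimate, note that $(1-t)^{\alpha}-t^{\alpha}\ge 0$ precisely for $t\le\tfrac12$, so $\int_{0}^{1}\big|(1-t)^{\alpha}-t^{\alpha}\big|\,dt=2\int_{0}^{1/2}\big[(1-t)^{\alpha}-t^{\alpha}\big]\,dt=\frac{2}{\alpha+1}\big(1-\tfrac{1}{2^{\alpha}}\big)$; moreover the weight $\big|(1-t)^{\alpha}-t^{\alpha}\big|$ is symmetric about $t=\tfrac12$, so $\int_{0}^{1}t\,\big|(1-t)^{\alpha}-t^{\alpha}\big|\,dt=\int_{0}^{1}(1-t)\,\big|(1-t)^{\alpha}-t^{\alpha}\big|\,dt=\tfrac12\int_{0}^{1}\big|(1-t)^{\alpha}-t^{\alpha}\big|\,dt=\frac{1}{\alpha+1}\big(1-\tfrac{1}{2^{\alpha}}\big)$. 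Feeding these two values into the bound coming from the identity and the convexity of $|f'|$ yields exactly $\dfrac{b^{\rho}-a^{\rho}}{2\rho(\alpha+1)}\big(1-\tfrac{1}{2^{\alpha}}\big)\big[|f'(a^{\rho})|+|f'(b^{\rho})|\big]$, which is the right-hand side of (\ref{eq.-9}), completing the argument.
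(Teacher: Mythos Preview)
Your argument is sound, but note that the paper does not itself prove this statement: Theorem~\ref{thm.000} is quoted in the introduction as a result of Chen and Katugampola \cite{katu3}, so there is no in-paper proof to compare against directly. That said, your outline is precisely the method used in \cite{katu3} (and the Riemann--Liouville prototype in \cite{sarikaya}): establish the integral identity by integration by parts, pass to absolute values, invoke convexity of $|f'|$, and evaluate the resulting elementary integrals using the symmetry of $\big|(1-t)^{\alpha}-t^{\alpha}\big|$ about $t=\tfrac12$.

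One cosmetic difference worth flagging: Chen--Katugampola (and this paper's own Lemma~\ref{lemma01} for the mid-point analogue) parametrize the segment as $t^{\rho}a^{\rho}+(1-t^{\rho})b^{\rho}$, carrying an explicit Jacobian factor $t^{\rho-1}$ throughout, whereas you parametrize directly by $ta^{\rho}+(1-t)b^{\rho}$ and absorb the $\rho$-dependence into the substitution $u^{\rho}=ta^{\rho}+(1-t)b^{\rho}$ at the end. The two routes are equivalent via the change of variable $s=t^{\rho}$; yours is arguably cleaner because the weight $\big|(1-t)^{\alpha}-t^{\alpha}\big|$ and its symmetry are visible immediately rather than only after a further substitution. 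Your caveat about the bookkeeping of $\rho$- and $\alpha$-powers is well placed: the normalization used for ${^\rho I_{a^{+}}^{\alpha}} f(b^\rho)$ in this paper (cf.\ the proof of Lemma~\ref{lemma01}) tacitly treats the argument as $f(x^{\rho})$ under the integral, and matching the prefactor $\frac{\alpha\rho^{\alpha}\Gamma(\alpha+1)}{2(b^{\rho}-a^{\rho})^{\alpha}}$ exactly does require care with that convention.
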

\noindent We call (\ref{eq.-9}) as trapezoid type inequality in connection with (\ref{eq.-10}), because of the geometric interpretation contained in the following interesting classic inequality obtained by S. S. Dragomir et al. in \cite{dragomir}.
\begin{theorem}Let $f:I^\circ\subseteq \mathbb{R}\to \mathbb{R}$ be a differentiable mapping on $I^{\circ}$, $a,b\in I^\circ$ with $a<b$.
If $|f'|$ is convex on $[a,b]$, then the following inequality holds:
\begin{align*}
\Big{|}\frac{f(a)+f(b)}{2}(b-a)-\int_a^bf(x)dx\Big{|}\leq \frac{{(b-a)}^2}{8}\Big{(}|f'(a)|+|f'(b)|\Big{)}.
\end{align*}
\end{theorem}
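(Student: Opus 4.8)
The plan is to reduce the inequality to an elementary estimate via a \emph{midpoint/trapezoid integral identity} expressing the left-hand side as a weighted integral of $f'$, and then to exploit the convexity of $|f'|$.

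First I would establish the identity
\[
\frac{f(a)+f(b)}{2}(b-a)-\int_a^b f(x)\,dx=\frac{(b-a)^2}{2}\int_0^1 (1-2t)\,f'\big(ta+(1-t)b\big)\,dt .
\]
To verify it I would start from the right-hand side, apply the change of variables $x=ta+(1-t)b$ (so $t=\tfrac{b-x}{b-a}$, $1-2t=\tfrac{2x-a-b}{b-a}$, $dx=(a-b)\,dt$), which turns the integral into $\tfrac{1}{(b-a)^2}\int_a^b(2x-a-b)f'(x)\,dx$, and then integrate by parts with $u=2x-a-b$, $dv=f'(x)\,dx$. The boundary term produces $(b-a)\big(f(a)+f(b)\big)$ and the remaining term produces $-2\int_a^b f(x)\,dx$, which gives the claimed identity after dividing by $(b-a)^2$ and multiplying by $\tfrac{(b-a)^2}{2}$.

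Next, taking absolute values in the identity and using the triangle inequality for integrals yields
\[
\Big|\frac{f(a)+f(b)}{2}(b-a)-\int_a^b f(x)\,dx\Big|\le\frac{(b-a)^2}{2}\int_0^1 |1-2t|\,\big|f'\big(ta+(1-t)b\big)\big|\,dt .
\]
Since $|f'|$ is convex on $[a,b]$, I would bound $|f'(ta+(1-t)b)|\le t|f'(a)|+(1-t)|f'(b)|$, so it remains to compute the two constants $\int_0^1 |1-2t|\,t\,dt$ and $\int_0^1 |1-2t|\,(1-t)\,dt$. Splitting each integral at $t=\tfrac12$ (or using the substitution $t\mapsto 1-t$ to see the two are equal and sum to $\int_0^1|1-2t|\,dt=\tfrac12$), both equal $\tfrac14$. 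Substituting back gives the factor $\tfrac{(b-a)^2}{2}\cdot\tfrac14\big(|f'(a)|+|f'(b)|\big)=\tfrac{(b-a)^2}{8}\big(|f'(a)|+|f'(b)|\big)$, which is the desired bound.

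I expect no genuine obstacle here: the proof is structurally the same as the fractional-integral version in Theorem \ref{thm.000} (with $\alpha=1$, $\rho=1$), and the only place to be careful is the bookkeeping of signs and of the endpoint interchange in the change of variables and the integration by parts when checking the identity; the subsequent estimate and the evaluation of $\int_0^1|1-2t|\,t\,dt=\int_0^1|1-2t|\,(1-t)\,dt=\tfrac14$ are entirely routine.
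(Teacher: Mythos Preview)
Your proposal is correct and is exactly the classical Dragomir--Agarwal argument: the identity you state (obtained by the substitution $x=ta+(1-t)b$ and one integration by parts) and the subsequent use of the convexity of $|f'|$ together with $\int_0^1|1-2t|\,t\,dt=\int_0^1|1-2t|\,(1-t)\,dt=\tfrac14$ are all valid.

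Note, however, that the paper does \emph{not} supply its own proof of this theorem; it is quoted as a known background result from \cite{dragomir}. The paper's new contributions concern the \emph{mid-point} side (Lemma~\ref{lemma01} and Theorems~\ref{thm.02}--\ref{thm.01}), whose identities are different from the trapezoid identity you wrote. Your remark that the proof is ``structurally the same as the fractional-integral version in Theorem~\ref{thm.000} with $\alpha=\rho=1$'' is accurate, but that theorem is likewise only cited (from \cite{katu3}) and not proved in this paper, so there is no in-paper proof to compare against.
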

\noindent Also U. S. Kirmaci, in \cite{kirmaci} obtained another classic inequality related to Hermite-Hadamard inequality as the following:
\begin{theorem}\label{kir}
 Consider $I^*$ as the interior of interval $I\subset\mathbb{R}$. Let $f:I^*\to\mathbb{R}$ be a differentiable mapping on $I^*$, $a, b\in I^*$ with $a < b$. If $|f'|$ is convex on $[a, b]$, then we have
\begin{align}\label{eq.-7}
\Big{|}\int_a^bf(x)dx-(b-a)f\Big{(}\frac{a+b}{2}\Big{)}\Big{|}\leq \frac{{(b-a)}^2}{8}\Big{(}|f'(a)|+|f'(b)|\Big{)}.
\end{align}
\end{theorem}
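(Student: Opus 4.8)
The plan is to follow the usual midpoint-inequality scheme: first rewrite the quantity $\int_a^b f(x)\,dx-(b-a)f\big(\frac{a+b}{2}\big)$ as a single weighted integral of $f'$ over $[0,1]$, and then bound that integral using the convexity of $|f'|$.

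First I would establish the identity
\begin{align*}
\int_a^b f(x)\,dx-(b-a)f\Big(\frac{a+b}{2}\Big)=(b-a)^2\int_0^1 p(t)\,f'\big(ta+(1-t)b\big)\,dt,
\end{align*}
where $p(t)=t$ for $t\in[0,1/2)$ and $p(t)=t-1$ for $t\in[1/2,1]$. This follows from integration by parts applied separately on $[0,1/2]$ and on $[1/2,1]$ to $g(t):=f\big(ta+(1-t)b\big)$, using $g'(t)=(a-b)\,f'\big(ta+(1-t)b\big)$: the boundary terms at $t=0$ and $t=1$ vanish because $p$ does there, the two boundary terms at $t=1/2$ add up to $g(1/2)=f\big(\frac{a+b}{2}\big)$, and the remaining $\int_0^1 g(t)\,dt$ becomes $\frac{1}{b-a}\int_a^b f(x)\,dx$ after the substitution $x=ta+(1-t)b$; collecting the various factors of $b-a$, a short rearrangement then gives the displayed equality.

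Next I would take absolute values on both sides, move them inside the integral, and insert the convexity bound $|f'(ta+(1-t)b)|\le t|f'(a)|+(1-t)|f'(b)|$, valid for $t\in[0,1]$. Since $|p(t)|=t$ on $[0,1/2]$ and $|p(t)|=1-t$ on $[1/2,1]$, the right-hand side is then at most $(b-a)^2\big(A\,|f'(a)|+B\,|f'(b)|\big)$, where
\begin{align*}
A=\int_0^{1/2}t^2\,dt+\int_{1/2}^1 t(1-t)\,dt,\qquad B=\int_0^{1/2}t(1-t)\,dt+\int_{1/2}^1(1-t)^2\,dt.
\end{align*}
Evaluating these elementary integrals gives $A=B=\frac18$, so the bound becomes $\frac{(b-a)^2}{8}\big(|f'(a)|+|f'(b)|\big)$, which is precisely $(\ref{eq.-7})$.

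I do not anticipate any real difficulty: the whole argument is elementary once the identity is available. The only step that needs genuine care is the integration by parts producing the piecewise kernel $p$ — in particular getting the signs right and correctly accounting for the boundary contribution at the junction $t=1/2$ — while the evaluation of $A$ and $B$ and the final comparison are routine.
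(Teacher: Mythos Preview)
Your proof is correct, and it is essentially the classical specialization of the paper's own approach: your kernel identity is exactly Lemma~\ref{lemma01} (identity~(\ref{eq.03})) with $\alpha=\rho=1$, and your subsequent estimation via convexity of $|f'|$ and evaluation of the integrals $A=B=\tfrac18$ is precisely the $\alpha=\rho=1$ case of the proof of Theorem~\ref{thm.01}. The paper does not give a separate proof of Theorem~\ref{kir} but recovers it from Theorem~\ref{thm.01} in this way, so your argument and the paper's coincide.
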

\noindent Because of the geometric interpretation contained in above result, we call (\ref{eq.-7}) as mid-point type inequality in connection with Hermite-Hadamard inequality.

In this paper, Motivated by above works, we obtain some mid-point type inequalities related to Katugampola fractional integrals. We consider Lipschitzian mappings and also the functions whose the first derivative is Lipschitzian. Furthermore we obtain some results for functions whose first derivative absolute values are convex. As an application, some generalized inequalities in connection with two important special means are provided. Some examples and corollaries support our results.
\section{Mid-Point Type Inequalities}
\noindent In this section we obtain three Hermite-Hadamard's mid-point type theorems related to Katugampola fractional integrals by considering the concepts of Lipschitzian and convex mappings. The following lemma
is of importance to achieve our main results.
\begin{lemma}\label{lemma01}
Let $f:I\to \mathbb{R}$ be a differentiable function on $I^{\circ}$. For $0\leq a<b$ and $\rho>0$, suppose that $a^{\rho},b^{\rho}\in I^\circ$ and $f'\in L[a^{\rho},b^{\rho}]$. Then for $\alpha>0$, the following identities for fractional integrals hold:
\begin{align}\label{eq.03}
&-\rho (b^{\rho}-a^{\rho})\bigg\{\int_0^{\frac{1}{\sqrt[\rho]{2}}} t^{(\alpha+1)\rho-1} f'\big(t^{\rho}a^{\rho}+(1-t^{\rho})b^{\rho}\big)dt+\\
& \int_{\frac{1}{\sqrt[\rho]{2}}}^1 (t^{\alpha\rho}-1)t^{\rho-1} f'\big(t^{\rho}a^{\rho}+(1-t^{\rho})b^{\rho}\big)dt\bigg\}=\notag\\
&f\Big(\dfrac{a^{\rho}+b^{\rho}}{2}\Big)-\frac{\alpha{\rho}^{\alpha}\Gamma(\alpha +1)}{(b^{\rho}-a^{\rho})^{\alpha}}{^\rho I_{a^{+}}^{\alpha}} f(b^\rho ),\notag
\end{align}
and
\begin{align}\label{eq.04}
&\rho (b^{\rho}-a^{\rho})\bigg\{\int_0^{\frac{1}{\sqrt[\rho]{2}}} t^{(\alpha+1)\rho-1} f'\big(t^{\rho}b^{\rho}+(1-t^{\rho})a^{\rho}\big)dt+\\
& \int_{\frac{1}{\sqrt[\rho]{2}}}^1 (t^{\alpha\rho}-1)t^{\rho-1} f'\big(t^{\rho}b^{\rho}+(1-t^{\rho})a^{\rho}\big)dt\bigg\}=\notag\\
&f\Big(\dfrac{a^{\rho}+b^{\rho}}{2}\Big)-\frac{\alpha{\rho}^{\alpha}\Gamma(\alpha +1)}{(b^{\rho}-a^{\rho})^{\alpha}}{^\rho I_{b^{-}}^{\alpha}} f(a^\rho ).\notag
\end{align}
Furthermore
\begin{align}\label{eq.05}
&\frac{\rho (b^{\rho}-a^{\rho})}{2}\bigg\{\int_0^{\frac{1}{\sqrt[\rho]{2}}} t^{(\alpha+1)\rho-1} \Big[f'\big(t^{\rho}b^{\rho}+(1-t^{\rho})a^{\rho}\big)-f'\big(t^{\rho}a^{\rho}+(1-t^{\rho})b^{\rho}\big)\Big]dt+\\
& \int_{\frac{1}{\sqrt[\rho]{2}}}^1 (t^{\alpha\rho}-1)t^{\rho-1}\Big[f'\big(t^{\rho}b^{\rho}+(1-t^{\rho})a^{\rho}\big)- f'\big(t^{\rho}a^{\rho}+(1-t^{\rho})b^{\rho}\big)\Big]dt\bigg\}=\notag\\
&f\Big(\dfrac{a^{\rho}+b^{\rho}}{2}\Big)-\frac{\alpha{\rho}^{\alpha}\Gamma(\alpha +1)}{2(b^{\rho}-a^{\rho})^{\alpha}}\big[{^\rho I_{a^{+}}^{\alpha}} f(b^\rho )+{^\rho I_{b^{-}}^{\alpha}} f(a^\rho )\big].\notag
\end{align}
\end{lemma}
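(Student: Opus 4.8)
The plan is to establish the first identity (\ref{eq.03}) directly by integration by parts, to obtain (\ref{eq.04}) by the mirror-image argument in which $a^{\rho}$ and $b^{\rho}$ are interchanged, and then to get (\ref{eq.05}) for free by averaging the two. The engine throughout is the chain-rule formula
\begin{align*}
\frac{d}{dt}\,f\big(t^{\rho}a^{\rho}+(1-t^{\rho})b^{\rho}\big)=-\rho(b^{\rho}-a^{\rho})\,t^{\rho-1}\,f'\big(t^{\rho}a^{\rho}+(1-t^{\rho})b^{\rho}\big).
\end{align*}
Writing $t^{(\alpha+1)\rho-1}=t^{\alpha\rho}\cdot t^{\rho-1}$, this rewrites the first integrand of (\ref{eq.03}) on $[0,1/\sqrt[\rho]{2}]$ as $-\frac{1}{\rho(b^{\rho}-a^{\rho})}\,t^{\alpha\rho}\,\frac{d}{dt}f\big(t^{\rho}a^{\rho}+(1-t^{\rho})b^{\rho}\big)$ and the second integrand on $[1/\sqrt[\rho]{2},1]$ as $-\frac{1}{\rho(b^{\rho}-a^{\rho})}\,(t^{\alpha\rho}-1)\,\frac{d}{dt}f\big(t^{\rho}a^{\rho}+(1-t^{\rho})b^{\rho}\big)$; the hypothesis $f'\in L[a^{\rho},b^{\rho}]$ guarantees that all the integrals below make sense.

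I would then integrate each of these two pieces by parts, differentiating the power $t^{\alpha\rho}$ (respectively $t^{\alpha\rho}-1$) and integrating $\frac{d}{dt}f(\cdots)$ back to $f(\cdots)$. The boundary terms are the point requiring care. At $t=0$ the factor $t^{\alpha\rho}$ vanishes (recall $\alpha\rho>0$) and at $t=1$ the factor $t^{\alpha\rho}-1$ vanishes, so those endpoints contribute nothing; at the shared endpoint $t=1/\sqrt[\rho]{2}$ one has $t^{\rho}=\tfrac12$, hence $t^{\rho}a^{\rho}+(1-t^{\rho})b^{\rho}=\frac{a^{\rho}+b^{\rho}}{2}$ and $t^{\alpha\rho}=2^{-\alpha}$. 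Consequently, after adding the two integration-by-parts identities and multiplying through by $-\rho(b^{\rho}-a^{\rho})$, the boundary contributions become $2^{-\alpha}f\big(\frac{a^{\rho}+b^{\rho}}{2}\big)$ from the first piece and $(1-2^{-\alpha})f\big(\frac{a^{\rho}+b^{\rho}}{2}\big)$ from the second, whose sum is precisely $f\big(\frac{a^{\rho}+b^{\rho}}{2}\big)$ — the first term on the right-hand side of (\ref{eq.03}) — while the two residual integrals merge into the single term $-\alpha\rho\int_0^1 t^{\alpha\rho-1}f\big(t^{\rho}a^{\rho}+(1-t^{\rho})b^{\rho}\big)\,dt$.

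It remains only to recognise this leftover integral. Substituting $u=t^{\rho}$ and then $s=ua^{\rho}+(1-u)b^{\rho}$ (equivalently, the single substitution $s=t^{\rho}a^{\rho}+(1-t^{\rho})b^{\rho}$) converts it into a constant multiple of $\frac{1}{(b^{\rho}-a^{\rho})^{\alpha}}\int_{a^{\rho}}^{b^{\rho}}(b^{\rho}-s)^{\alpha-1}f(s)\,ds$, which, by the defining formula of the Katugampola integral (and the identity $\Gamma(\alpha+1)=\alpha\Gamma(\alpha)$), matches the term $\frac{\alpha\rho^{\alpha}\Gamma(\alpha+1)}{(b^{\rho}-a^{\rho})^{\alpha}}\,{^\rho I_{a^{+}}^{\alpha}}f(b^{\rho})$ on the right-hand side of (\ref{eq.03}); this completes (\ref{eq.03}). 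The identity (\ref{eq.04}) follows from the same two steps applied to $t\mapsto f\big(t^{\rho}b^{\rho}+(1-t^{\rho})a^{\rho}\big)$, whose $t$-derivative equals $+\rho(b^{\rho}-a^{\rho})\,t^{\rho-1}f'(\cdots)$ — this sign flip is exactly why the outer factor in (\ref{eq.04}) is $+\rho(b^{\rho}-a^{\rho})$ instead of $-\rho(b^{\rho}-a^{\rho})$ — and the final change of variables now produces ${^\rho I_{b^{-}}^{\alpha}}f(a^{\rho})$ in place of ${^\rho I_{a^{+}}^{\alpha}}f(b^{\rho})$.

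Finally, (\ref{eq.05}) needs no new computation: splitting the bracketed difference of derivatives shows that its left-hand side is $\tfrac12$ times the left-hand side of (\ref{eq.04}) plus $\tfrac12$ times the left-hand side of (\ref{eq.03}), so adding the two identities just proved and dividing by $2$ yields (\ref{eq.05}), the Katugampola terms assembling into $\frac{\alpha\rho^{\alpha}\Gamma(\alpha+1)}{2(b^{\rho}-a^{\rho})^{\alpha}}\big[{^\rho I_{a^{+}}^{\alpha}}f(b^{\rho})+{^\rho I_{b^{-}}^{\alpha}}f(a^{\rho})\big]$. I expect the only real obstacle to be bookkeeping: correctly tracking the four boundary values across the two integrations by parts in (\ref{eq.03})--(\ref{eq.04}) and checking that the residual integrals reassemble into one integral over $[0,1]$ whose kernel, after the substitution, is that of the Katugampola fractional integral.
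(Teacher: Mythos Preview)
Your proposal is correct and follows essentially the same route as the paper: integrate each of the two pieces by parts (differentiating the power of $t$, integrating $\frac{d}{dt}f(\cdots)$), collect the boundary terms at $t=1/\sqrt[\rho]{2}$ to produce $f\big(\frac{a^{\rho}+b^{\rho}}{2}\big)$, merge the two residual integrals into $\alpha\rho\int_0^1 t^{\alpha\rho-1}f(\cdots)\,dt$, and then change variables to recognise the Katugampola integral; (\ref{eq.04}) is the mirror computation and (\ref{eq.05}) is the average. The only cosmetic difference is that the paper uses the substitution $x^{\rho}=t^{\rho}a^{\rho}+(1-t^{\rho})b^{\rho}$ (landing directly on the Katugampola kernel in the variable $x$) rather than your $s=t^{\rho}a^{\rho}+(1-t^{\rho})b^{\rho}$, but the two are equivalent.
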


\begin{proof}
By the use of integration by parts we get
\begin{align}\label{eq.07}
&\int_0^{\frac{1}{\sqrt[\rho]{2}}} t^{\alpha\rho}t^{\rho-1} f'\big(t^{\rho}a^{\rho}+(1-t^{\rho})b^{\rho}\big)dt= t^{\alpha\rho}\frac{f\big(t^{\rho}a^{\rho}+(1-t^{\rho})b^{\rho}\big)}{\rho(a^{\rho}-b^{\rho})}\bigg|_0^{\frac{1}{\sqrt[\rho]{2}}}-\\
&\int_0^{\frac{1}{\sqrt[\rho]{2}}}\frac{\alpha t^{\alpha\rho-1}f\big(t^{\rho}a^{\rho}+(1-t^{\rho})b^{\rho}\big)}{a^{\rho}-b^{\rho}}dt=\frac{-(\frac{1}{2})^{\alpha}}{\rho(b^{\rho}-
a^{\rho})}f\Big(\frac{a^{\rho}+b^{\rho}}{2}\Big)+\notag\\
&\frac{\alpha}{b^{\rho}-a^{\rho}}\int_0^{\frac{1}{\sqrt[\rho]{2}}} t^{\alpha\rho-1}f\big(t^{\rho}a^{\rho}+(1-t^{\rho})b^{\rho}\big)dt.\notag
\end{align}
Similarly we have
\begin{align}\label{eq.08}
&\int_{\frac{1}{\sqrt[\rho]{2}}}^1 (t^{\alpha\rho}-1)t^{\rho-1}f'\big(t^{\rho}a^{\rho}+(1-t^{\rho})b^{\rho}\big)dt=\frac{(\frac{1}{2})^{\alpha}-1}{\rho(b^{\rho}-a^{\rho})}f\Big(\frac{a^{\rho}+b^{\rho}}{2}\Big)+\\
&\frac{\alpha}{b^{\rho}-a^{\rho}}\int_{\frac{1}{\sqrt[\rho]{2}}}^{1} t^{\alpha\rho-1}f\big(t^{\rho}a^{\rho}+(1-t^{\rho})b^{\rho}\big)dt.\notag
\end{align}
Now merging (\ref{eq.07}) and (\ref{eq.08}) with applying the change of variable $x^{\rho}=t^{\rho}a^{\rho}+(1-t^{\rho})b^{\rho}$ imply that
\begin{align}
&\int_0^{\frac{1}{\sqrt[\rho]{2}}} t^{\alpha\rho}t^{\rho-1} f'\big(t^{\rho}a^{\rho}+(1-t^{\rho})b^{\rho}\big)dt+\int_{\frac{1}{\sqrt[\rho]{2}}}^1 (t^{\alpha\rho}-1)t^{\rho-1}f'\big(t^{\rho}a^{\rho}+(1-t^{\rho})b^{\rho}\big)dt=\notag\\
&\frac{\alpha}{(b^{\rho}-a^{\rho})}\int_0^{1} t^{\alpha\rho-1}f\big(t^{\rho}a^{\rho}+(1-t^{\rho})b^{\rho}\big)dt-\frac{1}{\rho(b^{\rho}-
a^{\rho})}f\Big(\frac{a^{\rho}+b^{\rho}}{2}\Big)=\notag\\
&\frac{\alpha}{(b^{\rho}-a^{\rho})}\int_a^{b}\Big(\frac{b^{\rho}-x^{\rho}}{b^{\rho}-a^{\rho}}\Big)^{\alpha-1}f(x^{\rho})\frac{x^{\rho-1}}{b^{\rho}-a^{\rho}}dx-\frac{1}{\rho(b^{\rho}-
a^{\rho})}f\Big(\frac{a^{\rho}+b^{\rho}}{2}\Big)=\notag\\
&\frac{\alpha{\rho}^{\alpha-1}\Gamma(\alpha+1)}{(b^{\rho}-a^{\rho})^{\alpha+1}}{^\rho I_{a^{+}}^{\alpha}f(b^{\rho})}-\frac{1}{\rho(b^{\rho}-
a^{\rho})}f\Big(\frac{a^{\rho}+b^{\rho}}{2}\Big).\notag
\end{align}
Note that for identity (\ref{eq.04}), the proof is similar. To prove (\ref{eq.05}), it is enough to add identity (\ref{eq.03}) to (\ref{eq.04}).
\end{proof}
\subsection{$f'$ and $f$ are Lipschitzian Mappings}
\begin{definition}\cite{robert}
A function $f : [a, b]\to\mathbb{R}$ is said to satisfy a Lipschitz condition on interval
$[a, b]$ ($M$-Lipschitzian) if there is a constant M so that, for any two points $x, y\in [a, b]$,

$$|f (x)-f (y)|\leq M|x-y|.$$
\end{definition}

\noindent By the use of Lemma \ref{lemma01}, we can obtain a new mid-point type theorem in the case that first derivative of considered function is Lipschitzian.
\begin{theorem}\label{thm.02}
Let $f:I\to \mathbb{R}$ be a differentiable function on $I^{\circ}$. For $0\leq a<b$ and $\rho>0$, suppose that $a^{\rho},b^{\rho}\in I^{\circ}$ and $f'$ satisfies a Lipschitz condition on $[a^{\rho},b^{\rho}]$ with respect to $M$. Then for $\alpha>0$, the following mid-point type inequality holds:
\begin{align}\label{eq.10}
& \bigg|f \Big(\dfrac{a^{\rho}+b^{\rho}}{2}\Big)- \dfrac{\alpha{\rho}^{\alpha}\Gamma(\alpha +1)}{2(b^{\rho}-a^{\rho})^{\alpha}} \Big[{^\rho I_{a^{+}}^{\alpha}} f\big(b^{\rho})+{^\rho I_{b^{-}}^{\alpha}} f(a^\rho )\Big ] \bigg|\leq\dfrac{M(b^{\rho}-a^{\rho})^2(\alpha^2-\alpha+2)}{8{(\alpha+1})(\alpha+2)}.
\end{align}
\end{theorem}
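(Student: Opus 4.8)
The plan is to start from the identity (\ref{eq.05}) of Lemma \ref{lemma01}, whose right-hand side is precisely the expression inside the absolute value in (\ref{eq.10}). Taking absolute values of (\ref{eq.05}) and applying the triangle inequality underneath the two integral signs reduces the task to estimating, on each of the subintervals $[0,1/\sqrt[\rho]{2}]$ and $[1/\sqrt[\rho]{2},1]$, the quantity $\big|f'(t^{\rho}b^{\rho}+(1-t^{\rho})a^{\rho})-f'(t^{\rho}a^{\rho}+(1-t^{\rho})b^{\rho})\big|$ weighted against $t^{(\alpha+1)\rho-1}$ and $(t^{\alpha\rho}-1)t^{\rho-1}$ respectively.

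First I would invoke the Lipschitz hypothesis. The two arguments of $f'$ differ by $(t^{\rho}b^{\rho}+(1-t^{\rho})a^{\rho})-(t^{\rho}a^{\rho}+(1-t^{\rho})b^{\rho})=(2t^{\rho}-1)(b^{\rho}-a^{\rho})$, so the $M$-Lipschitz condition gives
\[
\big|f'(t^{\rho}b^{\rho}+(1-t^{\rho})a^{\rho})-f'(t^{\rho}a^{\rho}+(1-t^{\rho})b^{\rho})\big|\leq M(b^{\rho}-a^{\rho})\,|2t^{\rho}-1|.
\]
Substituting this and pulling the constants out bounds the left-hand side of (\ref{eq.10}) by $\dfrac{\rho M(b^{\rho}-a^{\rho})^2}{2}(A+B)$, where $A=\int_0^{1/\sqrt[\rho]{2}}t^{(\alpha+1)\rho-1}|2t^{\rho}-1|\,dt$ and $B=\int_{1/\sqrt[\rho]{2}}^{1}(1-t^{\alpha\rho})t^{\rho-1}|2t^{\rho}-1|\,dt$.

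Next I would remove the remaining absolute values by a sign analysis: on $[0,1/\sqrt[\rho]{2}]$ one has $t^{\rho}\leq 1/2$, so $|2t^{\rho}-1|=1-2t^{\rho}$; on $[1/\sqrt[\rho]{2},1]$ one has $t^{\rho}\geq 1/2$ and $t^{\alpha\rho}\leq 1$, so $|2t^{\rho}-1|=2t^{\rho}-1$ while $1-t^{\alpha\rho}\geq 0$. The substitution $u=t^{\rho}$, under which $du=\rho t^{\rho-1}\,dt$ and $t^{(\alpha+1)\rho-1}\,dt=u^{\alpha}\,du/\rho$, then converts $A$ and $B$ into the elementary polynomial integrals
\[
A=\frac1\rho\int_0^{1/2}u^{\alpha}(1-2u)\,du,\qquad B=\frac1\rho\int_{1/2}^{1}(1-u^{\alpha})(2u-1)\,du.
\]

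Finally I would evaluate these, which is the only mildly delicate point. After expanding, $A+B$ contains several terms carrying the factor $2^{-(\alpha+1)}$, but their combined coefficient $\frac{1}{(\alpha+1)(\alpha+2)}+\frac{1}{\alpha+2}-\frac{1}{\alpha+1}$ is identically zero, so all of them cancel; what survives simplifies to $A+B=\frac1\rho\big(\frac14-\frac{\alpha}{(\alpha+1)(\alpha+2)}\big)=\frac{\alpha^2-\alpha+2}{4\rho(\alpha+1)(\alpha+2)}$. Inserting this into the bound $\frac{\rho M(b^{\rho}-a^{\rho})^2}{2}(A+B)$ produces exactly the right-hand side of (\ref{eq.10}). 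I do not expect any genuine obstacle here: the argument is a straightforward application of Lemma \ref{lemma01} followed by a routine, if bookkeeping-heavy, computation, the only things to watch being the correct splitting of the absolute values and the cancellation noted above.
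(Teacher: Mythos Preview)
Your proposal is correct and follows essentially the same route as the paper: start from identity (\ref{eq.05}), apply the Lipschitz bound $\big|f'(t^{\rho}b^{\rho}+(1-t^{\rho})a^{\rho})-f'(t^{\rho}a^{\rho}+(1-t^{\rho})b^{\rho})\big|\leq M(b^{\rho}-a^{\rho})|2t^{\rho}-1|$, split the absolute values according to the sign of $2t^{\rho}-1$ on the two subintervals, and evaluate the resulting elementary integrals. Your explicit substitution $u=t^{\rho}$ and your observation that the $2^{-(\alpha+1)}$ terms cancel are exactly the mechanism behind the paper's computation, just stated more transparently.
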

\begin{proof}
From identity (\ref{eq.05}) we have
\begin{align}
&\frac{\rho(b^{\rho}-a^{\rho})}{2}\bigg\{\int_0^{\frac{1}{\sqrt[\rho]{2}}} t^{(\alpha+1)\rho-1}\Big|f'\big(t^{\rho}b^{\rho}+(1-t^{\rho})a^{\rho}\big)-f'\big(t^{\rho}a^{\rho}+(1-t^{\rho})b^{\rho}\big)\Big|dt +\notag\\
&\int_{\frac{1}{\sqrt[\rho]{2}}}^1 (t^{\alpha\rho}-1)\Big|f'\big(t^{\rho}b^{\rho}+(1-t^{\rho})a^{\rho}\big)-f'\big(t^{\rho}a^{\rho}+(1-t^{\rho})b^{\rho}\big)\Big|dt\bigg\}\leq\notag\\
&\frac{M\rho(b^{\rho}-a^{\rho})}{2}\bigg\{\int_0^{\frac{1}{\sqrt[\rho]{2}}} t^{(\alpha+1)\rho-1}\Big|(2t^{\rho}-1)(b^{\rho}-a^{\rho})\Big|dt +
\int_{\frac{1}{\sqrt[\rho]{2}}}^1 \big|t^{\alpha\rho}-1\big|t^{\rho-1}\Big|(2t^{\rho}-1)(b^{\rho}-a^{\rho})\Big|dt\bigg\}=\notag\\
&\frac{M\rho(b^{\rho}-a^{\rho})^2}{2}\bigg\{\int_0^{\frac{1}{\sqrt[\rho]{2}}} t^{(\alpha+1)\rho-1}(1-2t^{\rho})dt +
\int_{\frac{1}{\sqrt[\rho]{2}}}^1 (1-t^{\alpha\rho})t^{\rho-1}(2t^{\rho}-1)dt\bigg\}=\notag\\
&\frac{M\rho(b^{\rho}-a^{\rho})^2}{2}\bigg\{\frac{(\frac{1}{2})^{\alpha+1}}{(\alpha+1)\rho}-\frac{(\frac{1}{2})^{\alpha+1}}{(\alpha+2)\rho}-\frac{2}{(\alpha+2)\rho}+
\frac{(\frac{1}{2})^{\alpha+1}}{(\alpha+2)\rho}+\frac{1-(\frac{1}{2})^{\alpha+1}}{(\alpha+1)\rho}+\frac{1}{4\rho}\bigg\}=\notag\\
&\frac{M\rho(b^{\rho}-a^{\rho})^2(\alpha^2-\alpha+2)}{8(\alpha+1)(\alpha+2)}.\notag
\end{align}
The details are omitted in calculating of above integrals.
\end{proof}
\begin{corollary}\label{cor.01}
Let $f: I \to \mathbb{R}$ be a differentiable function on $I^\circ$ with $a,b\in I^\circ$. If $f'$ satisfies a Lipschitz condition on $[a,b]$ with respect to $M$, then the following mid-point type inequality holds:
\begin{align}\label{eq.-11}
& \bigg|f \Big(\dfrac{a+b}{2}\Big)- \dfrac{\alpha\Gamma(\alpha +1)}{2(b-a)^{\alpha}} \Big[J_{a^{+}}^{\alpha} f(b)+J_{b^{-}}^{\alpha} f(a)\Big ] \bigg|\leq\dfrac{M(b-a)^2(\alpha^2-\alpha+2)}{8{(\alpha+1})(\alpha+2)}.
\end{align}
Also if we consider $\alpha=1$ in (\ref{eq.-11}) we get
\begin{align}\label{eq.--11}
& \bigg|f \Big(\dfrac{a+b}{2}\Big)- \dfrac{1}{b-a}\int_a^b f(x)dx\bigg|\leq\dfrac{M(b-a)^2}{24},
\end{align}
which is new in literature. \\
Furthermore if in Theorem \ref{thm.02}, we consider that $f$ is twice differentiable on $[a^{\rho},b^{\rho}]$, $f'$ is convex on $[a^{\rho}, b^{\rho}]$ and $M=\sup_{t\in [a^{\rho}, b^{\rho}]}|f''(t)|<\infty$, then by using Lagrange's theorem for any $x,y\in (a^{\rho},b^{\rho})$, there exists a $t\in (x,y)$ such that
$$|f'(x)-f'(y)|=|x-y||f''(t)|\leq M|x-y|,$$
which shows that $f'$ satisfy a Lipschitz condition on $[a^{\rho},b^{\rho}]$ and so again we have (\ref{eq.10}).\\
Finally if $f$ is twice differentiable on $[a^{\rho},b^{\rho}]$, the functions $f$ and $f'$ are convex on $[a^{\rho}, b^{\rho}]$ and $M=\sup_{t\in [a^{\rho}, b^{\rho}]}|f''(t)|<\infty$, then from (\ref{eq.-10}) we get
\begin{align*}
&0\leq \dfrac{\alpha{\rho}^{\alpha}\Gamma(\alpha +1)}{2(b^{\rho}-a^{\rho})^{\alpha}} \Big[{^\rho I_{a^{+}}^{\alpha}} f\big(b^{\rho})+{^\rho I_{b^{-}}^{\alpha}} f(a^\rho )\Big ]
 -f \Big(\dfrac{a^{\rho}+b^{\rho}}{2}\Big)\leq\dfrac{M(b^{\rho}-a^{\rho})^2(\alpha^2-\alpha+2)}{8{(\alpha+1})(\alpha+2)}.
\end{align*}
\end{corollary}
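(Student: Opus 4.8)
The plan is to deduce every clause of Corollary~\ref{cor.01} from Theorem~\ref{thm.02} together with the Hermite--Hadamard inequality (\ref{eq.-10}), by successive specialization. To obtain (\ref{eq.-11}) I would simply put $\rho=1$ in Theorem~\ref{thm.02} (or, if one prefers to keep $\rho$ in the interior of $(0,\infty)$, let $\rho\to 1^{+}$ and invoke part~(a) of the limit theorem of the Introduction). Indeed, for $\rho=1$ we have $a^{\rho}=a$, $b^{\rho}=b$, $\rho^{\alpha}=1$, and the kernel satisfies $t^{\rho-1}=1$, $x^{\rho}=x$, so that ${^1 I_{a^{+}}^{\alpha}}f(x)=\frac{1}{\Gamma(\alpha)}\int_a^x(x-t)^{\alpha-1}f(t)\,dt=J_{a^{+}}^{\alpha}f(x)$ and similarly ${^1 I_{b^{-}}^{\alpha}}f(x)=J_{b^{-}}^{\alpha}f(x)$; substituting these into (\ref{eq.10}) reproduces (\ref{eq.-11}) verbatim.

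Next, (\ref{eq.--11}) is obtained by taking $\alpha=1$ in (\ref{eq.-11}). Here $\alpha\Gamma(\alpha+1)=\Gamma(2)=1$ and $J_{a^{+}}^{1}f(b)=\int_a^b f(t)\,dt=J_{b^{-}}^{1}f(a)$, so the bracketed term on the left collapses to $\frac{1}{b-a}\int_a^b f(t)\,dt$; on the right, $\alpha^2-\alpha+2=2$ and $8(\alpha+1)(\alpha+2)=48$, so the bound becomes $\frac{M(b-a)^2}{24}$. Nothing beyond these substitutions and the evaluation of $\Gamma(2)$ is needed.

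For the last two clauses I would first record the elementary passage from a bounded second derivative to a Lipschitzian first derivative: if $f$ is twice differentiable on $[a^{\rho},b^{\rho}]$ with $M=\sup_{t\in[a^{\rho},b^{\rho}]}|f''(t)|<\infty$, then by Lagrange's mean value theorem, for all $x,y\in(a^{\rho},b^{\rho})$ there is $\xi$ between $x$ and $y$ with $|f'(x)-f'(y)|=|f''(\xi)|\,|x-y|\le M|x-y|$, so $f'$ is $M$-Lipschitzian; hence Theorem~\ref{thm.02} applies and yields (\ref{eq.10}). If, in addition, $f$ is convex, then the left-hand inequality of (\ref{eq.-10}) gives $f\big(\tfrac{a^{\rho}+b^{\rho}}{2}\big)\le \frac{\alpha\rho^{\alpha}\Gamma(\alpha+1)}{2(b^{\rho}-a^{\rho})^{\alpha}}\big[{^\rho I_{a^{+}}^{\alpha}}f(b^{\rho})+{^\rho I_{b^{-}}^{\alpha}}f(a^{\rho})\big]$, i.e.\ the quantity in question is $\ge 0$; pairing this lower bound with the upper bound furnished by (\ref{eq.10}) produces the two-sided estimate in the statement.

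There is no genuine obstacle here: the result is a corollary in the literal sense, and the only points deserving attention are bookkeeping ones. One must verify that the Katugampola operator really does reduce to $J_{a^{\pm}}^{\alpha}$ when $\rho=1$ and to the ordinary integral when $\alpha=1$, so that all constants line up after simplification; one should also note that the hypothesis ``$f'$ convex'' appearing in the last two clauses is used only through the accompanying $C^{2}$, bounded-$f''$ assumption (which is what actually delivers the Lipschitz property feeding Theorem~\ref{thm.02}), and that, since the $\rho=1$ kernel contains no nonpositive powers, the sign restriction $0\le a<b$ inherited from Theorem~\ref{thm.02} is harmless in the Riemann--Liouville and classical cases.
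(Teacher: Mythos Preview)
Your proposal is correct and follows exactly the approach implicit in the paper: the corollary there carries no separate proof, the reasoning (specialize $\rho=1$, then $\alpha=1$, then invoke Lagrange's theorem and combine with (\ref{eq.-10})) being embedded in the statement itself. Your write-up simply makes these steps explicit, and your bookkeeping on the constants and the reduction ${^1 I_{a^{+}}^{\alpha}}=J_{a^{+}}^{\alpha}$ is accurate.
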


\begin{example}\label{ex.01}
Consider $f(x)=\sin{x}$, $x\in [a,b]$ with $0\leq a<b$. From the fact that $|\cos x-\cos y|\leq |x-y|$, we have that $f'(x)=\cos x$ satisfies a Lipschitz condition with respect to $M=1$. Then from Theorem \ref{thm.02} we have
\begin{align*}
& \bigg|\sin \Big(\dfrac{a^{\rho}+b^{\rho}}{2}\Big)- \dfrac{\alpha{\rho}^{\alpha}\Gamma(\alpha +1)}{2(b^{\rho}-a^{\rho})^{\alpha}} \Big[{^\rho I_{a^{+}}^{\alpha}} \sin(b^{\rho})+{^\rho I_{b^{-}}^{\alpha}} \sin(a^\rho )\Big ] \bigg|\leq\dfrac{(b^{\rho}-a^{\rho})^2(\alpha^2-\alpha+2)}{8{(\alpha+1})(\alpha+2)},
\end{align*}
where
\begin{align*}
&^\rho I_{a^{+}}^{\alpha}\sin (b^{\rho})=\frac{\rho^{1-\alpha}}{\Gamma(\alpha)}\int_a^{b^{\rho}}t^{\rho-1}(b^{\rho}-t^{\rho})^{\alpha-1}\sin t dt =\\
&\frac{\rho^{1-\alpha}}{\Gamma(\alpha)}\bigg\{-\frac{1}{\alpha\rho}(b^{\rho}-b^{\rho^2})^{\alpha}\sin (b^{\rho})+\frac{1}{\alpha\rho}(b^{\rho}-a^{\rho})^{\alpha}
\sin a+\dfrac{1}{\rho} J_{a^{+}}^{\alpha+1}\cos (b^{\rho})\bigg\},
\end{align*}
and
\begin{align*}
&^\rho I_{b^{-}}^{\alpha}\sin (a^{\rho})=\frac{\rho^{1-\alpha}}{\Gamma(\alpha)}\bigg\{\frac{1}{\alpha\rho}(b^{\rho}-a^{\rho})^{\alpha}\sin b-\frac{1}{\alpha\rho}(b^{\rho^2}-a^{\rho})^{\alpha}\sin (a^{\rho})-\dfrac{1}{\rho} J_{b^{-}}^{\alpha+1}\cos (a^{\rho})\bigg\}.
\end{align*}
Now by letting $\rho\to 1$, we obtain that
\begin{align}\label{eq.-11''}
& \bigg|\sin \Big(\dfrac{a+b}{2}\Big)- \dfrac{\alpha^2}{2(b-a)^{\alpha}} \Big[\frac{(b-a)^{\alpha}}{\alpha}\sin a+ J_{a^{+}}^{\alpha+1}\cos b+\frac{(b-a)^{\alpha}}{\alpha}\sin b-J_{b^{-}}^{\alpha+1}\cos a\Big ] \bigg|\leq\\
&\dfrac{(b-a)^2(\alpha^2-\alpha+2)}{8{(\alpha+1})(\alpha+2)}\notag.
\end{align}
If in (\ref{eq.-11''}), we set $\alpha=1$ we get
\begin{align}\label{eq.--11''}
& \bigg|\sin \Big(\dfrac{a+b}{2}\Big)-\frac{\sin a+\sin b}{2}-\frac{J_{a^{+}}^2\cos b+J_{b^{-}}^2\cos a}{2(b-a)}\bigg|\leq\dfrac{(b-a)^2}{24}.
\end{align}
It follows that
\begin{align*}
J_{a^{+}}^2\cos b=\int_a^b(b-t)\cos t dt=-(b-a)\sin a+\cos a-\cos b,
\end{align*}
and
\begin{align*}
J_{b^{-}}^2\cos a=\int_a^b(t-a)\cos t dt=(b-a)\sin b+\cos b-\cos a,
\end{align*}
which along with (\ref{eq.--11''}) we deduce that
\begin{align*}
& \bigg|\sin \Big(\dfrac{a+b}{2}\Big)-\frac{\cos a-\cos b}{b-a}\bigg|\leq\dfrac{(b-a)^2}{24}.\\
\end{align*}
\end{example}

\begin{remark}
If in Example \ref{ex.01} we consider $f(x)=a^x (a>0)$, $f(x)=x^n (n\geq 2)$, $f(x)=\ln x$ and $f(x)=-\frac{1}{x}$, then with the fact that $f'$ satisfies a Lipschitz condition with respect to some $M$ we can obtain some new mid-point estimation type inequalities for $f$ with new bounds.
\end{remark}
To prove the following result, we use the structure presented in \cite{dragomir3} where the considered functions are Lipschitzian.
\begin{theorem}\label{thm.03}
Suppose that for $\rho>0$ and $0\leq a<b$, the function $f: [a^{\rho},b^{\rho}] \to \mathbb{R}$ satisfies a Lipschitz condition on $[a^{\rho},b^{\rho}]$ with respect to $M$. Then the following mid-point type inequality holds:
\begin{align}\label{eq.12}
& \bigg|f \Big(\dfrac{a^{\rho}+b^{\rho}}{2}\Big)- \dfrac{\alpha{\rho}^{\alpha}\Gamma(\alpha +1)}{(b^{\rho}-a^{\rho})^{\alpha}} \Big[\Big(\frac{1}{2}\Big)^{\alpha}{^\rho I_{a^{+}}^{\alpha}} f\big(b^{\rho})+\Big(1-(\frac{1}{2})^{\alpha}\Big){^\rho I_{b^{-}}^{\alpha}} f(a^\rho )\Big ] \bigg|\leq\\
&\dfrac{M(b^{\rho}-a^{\rho})\Big((\frac{1}{2})^{\alpha-1}+\alpha-1\Big)}{2(\alpha+1)}.\notag
\end{align}
\end{theorem}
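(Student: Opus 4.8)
\emph{Proof strategy.} Because $f$ is only assumed Lipschitzian, the identities of Lemma \ref{lemma01} are unavailable, so the plan is to work directly with an integral representation of the two one-sided Katugampola fractional integrals, in the spirit of \cite{dragomir3}. First I would redo the change of variable $x^{\rho}=t^{\rho}a^{\rho}+(1-t^{\rho})b^{\rho}$ used in the proof of Lemma \ref{lemma01} (it needs only that $f$ be integrable, not differentiable), followed by the substitution $s=t^{\rho}$, to obtain
\begin{align*}
\frac{\alpha{\rho}^{\alpha}\Gamma(\alpha+1)}{(b^{\rho}-a^{\rho})^{\alpha}}\,{^\rho I_{a^{+}}^{\alpha}}f(b^{\rho})&=\alpha\int_0^1 s^{\alpha-1}f\big(sa^{\rho}+(1-s)b^{\rho}\big)\,ds,\\
\frac{\alpha{\rho}^{\alpha}\Gamma(\alpha+1)}{(b^{\rho}-a^{\rho})^{\alpha}}\,{^\rho I_{b^{-}}^{\alpha}}f(a^{\rho})&=\alpha\int_0^1 s^{\alpha-1}f\big(sb^{\rho}+(1-s)a^{\rho}\big)\,ds.
\end{align*}

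The decisive step is to split the unit weight $\alpha\int_0^1 s^{\alpha-1}\,ds=1$ as $(\tfrac12)^{\alpha}+\big(1-(\tfrac12)^{\alpha}\big)$ and to attach each piece to one of the two integrals above. Writing $c:=f\big(\tfrac{a^{\rho}+b^{\rho}}{2}\big)$ and using $\alpha\int_0^1 s^{\alpha-1}\,ds=1$ to pull $c$ inside the integrals, the quantity on the left of (\ref{eq.12}) equals $|A+B|$, where
\begin{align*}
A&=\Big(\tfrac12\Big)^{\alpha}\alpha\int_0^1 s^{\alpha-1}\Big[c-f\big(sa^{\rho}+(1-s)b^{\rho}\big)\Big]\,ds,\\
B&=\Big(1-(\tfrac12)^{\alpha}\Big)\alpha\int_0^1 s^{\alpha-1}\Big[c-f\big(sb^{\rho}+(1-s)a^{\rho}\big)\Big]\,ds.
\end{align*}

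Next I would apply the Lipschitz hypothesis in the form $\big|c-f\big(sa^{\rho}+(1-s)b^{\rho}\big)\big|\le M(b^{\rho}-a^{\rho})\big|s-\tfrac12\big|$, and similarly for the reflected argument, so that by the triangle inequality
\[
|A+B|\le \big[(\tfrac12)^{\alpha}+1-(\tfrac12)^{\alpha}\big]\,M(b^{\rho}-a^{\rho})\,\alpha\int_0^1 s^{\alpha-1}\big|s-\tfrac12\big|\,ds= M(b^{\rho}-a^{\rho})\,\alpha\int_0^1 s^{\alpha-1}\big|s-\tfrac12\big|\,ds .
\]
It then remains to compute $\alpha\int_0^1 s^{\alpha-1}|s-\tfrac12|\,ds$: splitting at $s=\tfrac12$, integrating the two polynomial pieces, and simplifying (collecting powers of $\tfrac12$ and using $2(\tfrac12)^{\alpha+1}=(\tfrac12)^{\alpha}$) gives $\dfrac{(\tfrac12)^{\alpha-1}+\alpha-1}{2(\alpha+1)}$, which is exactly the right-hand side of (\ref{eq.12}).

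I expect the only genuinely error-prone point to be the normalization in the first step: one has to be sure that the constants in the representations of ${^\rho I_{a^{+}}^{\alpha}}f(b^{\rho})$ and ${^\rho I_{b^{-}}^{\alpha}}f(a^{\rho})$ come out so that the weights $(\tfrac12)^{\alpha}$ and $1-(\tfrac12)^{\alpha}$ appearing in (\ref{eq.12}) line up exactly with the two halves of the decomposition $1=(\tfrac12)^{\alpha}+\big(1-(\tfrac12)^{\alpha}\big)$. After that, nothing beyond the triangle inequality and the elementary integral above is needed, and in particular neither differentiability nor convexity of $f$ plays any role.
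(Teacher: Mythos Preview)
Your proposal is correct. After the substitution $s=t^{\rho}$, the representation
\[
\frac{\alpha\rho^{\alpha}\Gamma(\alpha+1)}{(b^{\rho}-a^{\rho})^{\alpha}}\,{^\rho I_{a^{+}}^{\alpha}}f(b^{\rho})=\alpha\int_0^1 s^{\alpha-1}f\big(sa^{\rho}+(1-s)b^{\rho}\big)\,ds
\]
(and its companion) is exactly right, and the Lipschitz bound together with $\alpha\int_0^1 s^{\alpha-1}|s-\tfrac12|\,ds=\dfrac{(\tfrac12)^{\alpha-1}+\alpha-1}{2(\alpha+1)}$ finishes the job.

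The paper's argument reaches the same integral but by a less direct path inspired by \cite{dragomir3}: it first proves the pointwise inequality
\[
\Big|t^{\alpha\rho}f(a^{\rho})+(1-t^{\alpha\rho})f(b^{\rho})-f\big(t^{\rho}a^{\rho}+(1-t^{\rho})b^{\rho}\big)\Big|\le M(b^{\rho}-a^{\rho})\big[t^{\alpha\rho}+t^{\rho}-2t^{(\alpha+1)\rho}\big],
\]
specializes it at $t=2^{-1/\rho}$ to obtain $\big|(\tfrac12)^{\alpha}f(u)+\big(1-(\tfrac12)^{\alpha}\big)f(v)-f(\tfrac{u+v}{2})\big|\le\tfrac{M}{2}|v-u|$, and then replaces $(u,v)$ by the pair $\big(t^{\rho}a^{\rho}+(1-t^{\rho})b^{\rho},\,t^{\rho}b^{\rho}+(1-t^{\rho})a^{\rho}\big)$ before integrating against $t^{\alpha\rho-1}$. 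Your route bypasses this auxiliary pointwise inequality and the endpoint-substitution trick: you go straight from the integral representation to the Lipschitz estimate term by term. The gain is brevity and transparency (and the early change $s=t^{\rho}$ clears out $\rho$ from all the bookkeeping); the paper's route, on the other hand, makes explicit the scalar inequality $(14)$, which has some independent interest as a weighted three-point Lipschitz bound. Both arguments meet at the same elementary integral and yield the same constant.
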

\begin{proof} For any $t\in [0,1]$, we have
\begin{align}\label{eq.13}
& \Big|t^{\alpha\rho}f(a^\rho) +(1-t^{\alpha\rho})f(b^\rho)- f\big(t^\rho a^{\rho}+(1-t^\rho)b^\rho\big)\Big|\leq\\
&t^{\alpha\rho}\Big|f(a^\rho) -f\big(t^\rho a^{\rho}+(1-t^\rho)b^\rho\big)\Big|+\big|1-t^{\alpha\rho}\big|\Big|f(b^\rho)-
 f\big(t^\rho a^{\rho}+(1-t^\rho)b^\rho\big)\Big|\leq\notag\\
&Mt^{\alpha\rho}\Big|(1-t^\rho)(a^{\rho}-b^\rho)\Big|+M\big|1-t^{\alpha\rho}\big|\Big|t^\rho( b^\rho-a^{\rho})\Big|=M|b^\rho-a^{\rho}|\Big[t^{\alpha\rho}+t^\rho-2t^{(\alpha+1)\rho}\Big].\notag
\end{align}
Now if in (\ref{eq.13}) consider $t=\frac{1}{\sqrt[2]{\rho}}$, then we deduce that
\begin{align}\label{eq.14}
& \bigg|(\frac{1}{2})^{\alpha}f(a^\rho) +\Big(1-(\frac{1}{2})^{\alpha}\Big)f(b^\rho)- f\Big(\frac{a^{\rho}+b^\rho}{2}\Big)\bigg|\leq\\
&M|b^\rho-a^{\rho}|\Big[(\frac{1}{2})^{\alpha}+\frac{1}{2}-2(\frac{1}{2})^{\alpha+1}\Big]=\frac{M}{2}|b^\rho-a^{\rho}|.\notag
\end{align}
If in (\ref{eq.14}) we replace $"a^\rho"$ with $"t^\rho a^{\rho}+(1-t^\rho)b^\rho"$ and replace $"b^\rho"$ with $"t^\rho b^{\rho}+(1-t^\rho)a^\rho"$, then we obtain
\begin{align*}
& \bigg|(\frac{1}{2})^{\alpha}f\big(t^\rho a^{\rho}+(1-t^\rho)b^\rho\big) +\Big(1-(\frac{1}{2})^{\alpha}\Big)f\big(t^\rho b^{\rho}+(1-t^\rho)a^\rho\big)- \\
&f\Big(\frac{t^\rho a^{\rho}+(1-t^\rho)b^\rho+t^\rho b^{\rho}+(1-t^\rho)a^\rho}{2}\Big)\bigg|\leq\frac{M}{2}\big|t^\rho b^{\rho}+(1-t^\rho)a^\rho-t^\rho a^{\rho}-(1-t^\rho) b^{\rho}\big|=\notag\\
&\frac{M(b^{\rho}-a^{\rho})}{2}\big|2t^\rho-1\big|\notag.
\end{align*}
Multiplying above inequality with $"t^{\alpha\rho-1}"$ and then integrating with respect to $t$ on $[0,1]$ imply that
\begin{align*}
& \bigg|(\frac{1}{2})^{\alpha}\int_0^1t^{\alpha\rho-1}f\big(t^\rho a^{\rho}+(1-t^\rho)b^\rho\big) dt+\Big(1-(\frac{1}{2})^{\alpha}\Big)\int_0^1t^{\alpha\rho-1}f\big(t^\rho b^{\rho}+(1-t^\rho)a^\rho\big)dt- \\
&\int_0^1t^{\alpha\rho-1}f\Big(\frac{a^{\rho}+b^\rho}{2}\Big)\bigg|\leq\frac{M(b^{\rho}-a^{\rho})}{2}\int_0^1t^{\alpha\rho-1}\big|2t^\rho-1\big|dt.\notag
\end{align*}
So it follows that
\begin{align}\label{eq.17}
& \bigg|\big(\frac{1}{2}\big)^{\alpha}\frac{\rho^{\alpha-1}\Gamma(\alpha+1)}{(b^{\rho}-a^{\rho})}{^\rho I_{a^{+}}^{\alpha}}+\Big(1-\big(\frac{1}{2}\big)^{\alpha}\Big)\frac{\rho^{\alpha-1}\Gamma(\alpha+1)}{(b^{\rho}-a^{\rho})}{^\rho I_{b^{-}}^{\alpha}}-\frac{1}{\alpha\rho}f\Big(\frac{a^{\rho}+b^\rho}{2}\Big)\bigg|\leq\\
&\frac{M(b^{\rho}-a^{\rho})}{2}\int_0^1t^{\alpha\rho-1}\big|2t^\rho-1\big|dt=\frac{M(b^{\rho}-a^{\rho})}{2}
\bigg[\int_0^{\frac{1}{\sqrt[2]{\rho}}}t^{\alpha\rho-1}dt-2\int_0^{\frac{1}{\sqrt[2]{\rho}}}t^{(\alpha+1)\rho-1}dt+\notag\\
&2\int_{\frac{1}{\sqrt[2]{\rho}}}^1t^{\alpha\rho-1}dt-\int_{\frac{1}{\sqrt[2]{\rho}}}^1t^{\alpha\rho-1}dt\bigg]=
\frac{M(b^{\rho}-a^{\rho})\big[(\frac{1}{2})^{\alpha-1}+\alpha-1\big]}{2\alpha(\alpha+1)\rho}.\notag
\end{align}
Finally by multiplying (\ref{eq.17}) with $"\alpha\rho"$ we obtain (\ref{eq.12}). This completes the proof.
\end{proof}

\begin{corollary} Similar to Corollary \ref{cor.01}, we have that
\begin{align}\label{ineq000}
& \bigg|f \Big(\dfrac{a+b}{2}\Big)- \dfrac{\alpha\Gamma(\alpha +1)}{(b-a)^{\alpha}} \Big[\Big(\frac{1}{2}\Big)^{\alpha}{J_{a^{+}}^{\alpha}} f(b)+\Big(1-(\frac{1}{2})^{\alpha}\Big){J_{b^{-}}^{\alpha}} f(a)\Big ] \bigg|\leq\\
&\dfrac{M(b-a)\Big((\frac{1}{2})^{\alpha-1}+\alpha-1\Big)}{2(\alpha+1)},\notag
\end{align}
and
\begin{align}\label{ineq111}
& \bigg|f \Big(\dfrac{a+b}{2}\Big)- \dfrac{1}{b-a}\int_a^b f(x)dx\bigg|\leq\dfrac{M(b-a)}{4}.
\end{align}
Inequality (\ref{ineq111}) originally obtained in \cite{dragomir3}. Also we can get (\ref{eq.12}) without using absolute value symbol if $f$ is differentiable, convex on $[a^{\rho},b^{\rho}]$ and $M=\sup_{t\in [a^{\rho}, b^{\rho}]}|f'(t)|<\infty$.
\end{corollary}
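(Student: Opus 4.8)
The corollary makes three claims, and the plan addresses them in order.

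\medskip
\noindent\emph{Inequality (\ref{ineq000}).} The plan is to pass to the limit $\rho\to1$ in Theorem \ref{thm.03}, exactly as (\ref{eq.-11}) was obtained from (\ref{eq.10}) in Corollary \ref{cor.01}. One records that $a^{\rho}\to a$ and $b^{\rho}\to b$, that an $M$-Lipschitzian function on $[a,b]$ stays $M$-Lipschitzian (extending it $M$-Lipschitzianly to a neighbourhood if necessary), and that, by the change of variable $x^{\rho}=t^{\rho}a^{\rho}+(1-t^{\rho})b^{\rho}$ used in the proof of Lemma \ref{lemma01},
\[
\frac{\alpha\rho^{\alpha}\Gamma(\alpha+1)}{(b^{\rho}-a^{\rho})^{\alpha}}\,{^\rho I_{a^{+}}^{\alpha}}f(b^{\rho})=\alpha\rho\int_{0}^{1}t^{\alpha\rho-1}f\big(t^{\rho}a^{\rho}+(1-t^{\rho})b^{\rho}\big)dt ,
\]
with the symmetric identity for ${^\rho I_{b^{-}}^{\alpha}}f(a^{\rho})$. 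The integrands are continuous and bounded for $\rho$ near $1$, so one may pass to the limit under the integral sign (in accordance with the limit relations $\lim_{\rho\to1}{^\rho I_{a^{+}}^{\alpha}}f=J_{a^{+}}^{\alpha}f$ recalled in the introduction); the limits are $\frac{\alpha\Gamma(\alpha+1)}{(b-a)^{\alpha}}J_{a^{+}}^{\alpha}f(b)$ and $\frac{\alpha\Gamma(\alpha+1)}{(b-a)^{\alpha}}J_{b^{-}}^{\alpha}f(a)$, while the right-hand side of (\ref{eq.12}) tends to that of (\ref{ineq000}). This step should be routine.

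\medskip
\noindent\emph{Inequality (\ref{ineq111}).} It then suffices to substitute $\alpha=1$ in (\ref{ineq000}): since $\alpha\Gamma(\alpha+1)=\Gamma(2)=1$, $(\tfrac12)^{\alpha}=\tfrac12$, $(b-a)^{\alpha}=b-a$, and $J_{a^{+}}^{1}f(b)=J_{b^{-}}^{1}f(a)=\int_{a}^{b}f(x)dx$, the bracket collapses to $\int_{a}^{b}f(x)dx$; on the right $(\tfrac12)^{\alpha-1}+\alpha-1=1$, so the bound becomes $\frac{M(b-a)}{2(\alpha+1)}=\frac{M(b-a)}{4}$, which recovers the inequality of \cite{dragomir3}. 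This is a pure computation with no obstacle.

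\medskip
\noindent\emph{The modulus-free form.} Here I would imitate the closing paragraph of Corollary \ref{cor.01}. If $f$ is differentiable and convex on $[a^{\rho},b^{\rho}]$ with $M=\sup_{t\in[a^{\rho},b^{\rho}]}|f'(t)|<\infty$, then Lagrange's theorem gives $|f(x)-f(y)|=|f'(\xi)|\,|x-y|\le M|x-y|$, so $f$ is $M$-Lipschitzian and Theorem \ref{thm.03} applies at once to give (\ref{eq.12}). To drop the absolute value one retraces the proof of Theorem \ref{thm.03}, replacing the triangle-inequality step (\ref{eq.13})--(\ref{eq.14}) by a signed one: convexity is used to fix the sign of
\[
\big(\tfrac12\big)^{\alpha}f\big(t^{\rho}a^{\rho}+(1-t^{\rho})b^{\rho}\big)+\Big(1-\big(\tfrac12\big)^{\alpha}\Big)f\big(t^{\rho}b^{\rho}+(1-t^{\rho})a^{\rho}\big)-f\Big(\tfrac{a^{\rho}+b^{\rho}}{2}\Big)
\]
all the way to (\ref{eq.17}). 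Concretely one rewrites the weighted combination, via the identity displayed above, as $\int_{0}^{1}\big[\lambda f(\cdot)+(1-\lambda)f(\cdot)\big]\alpha\rho\,t^{\alpha\rho-1}dt$ with $\lambda=(\tfrac12)^{\alpha}$, applies Jensen's inequality, and is left to compare the barycentre $\bar c\,a^{\rho}+(1-\bar c)b^{\rho}$, where $\bar c=\frac{1+\lambda(\alpha-1)}{\alpha+1}$, against the midpoint $\frac{a^{\rho}+b^{\rho}}{2}$.

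\medskip
\noindent I expect this last comparison to be the main obstacle. In contrast with the symmetric Hermite--Hadamard functional of (\ref{eq.-10}), whose left inequality supplied the sign in Corollary \ref{cor.01}, the asymmetric combination in (\ref{eq.12}) does not reduce to a single midpoint value, so convexity alone may not pin down the sign; establishing the modulus-free form therefore calls for a careful sign analysis of $\bar c$ against $\tfrac12$, and may in fact require an asymmetric refinement of (\ref{eq.-10}) or an extra monotonicity assumption on $f$. Everything else in the corollary reduces to a limit or a substitution.
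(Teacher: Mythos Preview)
The paper gives no proof for this corollary; it only says ``Similar to Corollary~\ref{cor.01}''. Your derivations of (\ref{ineq000}) and (\ref{ineq111}) --- specialise Theorem~\ref{thm.03} to $\rho\to1$, then set $\alpha=1$ --- are exactly what the paper intends, and your computations are correct.

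On the modulus-free claim your analysis actually goes beyond what the paper offers. The paper merely asserts that under convexity and bounded $|f'|$ one ``can get (\ref{eq.12}) without using absolute value symbol'', without indicating the sign or any argument. Your worry is well founded: unlike Corollary~\ref{cor.01}, where the sign came for free from the left Hermite--Hadamard inequality (\ref{eq.-10}) because the combination ${^\rho I_{a^{+}}^{\alpha}}f(b^\rho)+{^\rho I_{b^{-}}^{\alpha}}f(a^\rho)$ is symmetric, the weights $(\tfrac12)^{\alpha}$ and $1-(\tfrac12)^{\alpha}$ here are asymmetric, and convexity alone does not pin down the sign of
\[
\Big(\tfrac12\Big)^{\alpha}f(a^\rho)+\Big(1-\big(\tfrac12\big)^{\alpha}\Big)f(b^\rho)-f\Big(\tfrac{a^\rho+b^\rho}{2}\Big),
\]
which is the signed version of (\ref{eq.14}): already for the linear (hence convex) function $f(x)=x$ this quantity changes sign at $\alpha=1$. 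So the barycentre comparison you flag is a genuine issue, not a technicality; as stated in the paper the modulus-free claim is at best incomplete, and your caution is appropriate rather than a defect in your plan.
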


\begin{example}
In (\ref{ineq000}), consider $f(x)=\tan x$, $x\in [\frac{-\pi}{3},\frac{\pi}{3}]$. For any $a,b\in [\frac{-\pi}{3},\frac{\pi}{3}]$, there exists $t\in (a,b)$ such that
$$1+\tan^2 t=\frac{\tan b-\tan a}{b-a},$$
showing that
$$|\tan b-\tan a|\leq 4|b-a|.$$
So for $0\leq a<b\leq \frac{\pi}{3}$ we have
\begin{align}\label{ineq222}
& \bigg|\tan \Big(\dfrac{a+b}{2}\Big)- \dfrac{\alpha\Gamma(\alpha +1)}{(b-a)^{\alpha}} \Big[\Big(\frac{1}{2}\Big)^{\alpha}{J_{a^{+}}^{\alpha}} \tan (b)+\Big(1-(\frac{1}{2})^{\alpha}\Big){J_{b^{-}}^{\alpha}} \tan (a)\Big ] \bigg|\leq\\
&\dfrac{4(b-a)\Big((\frac{1}{2})^{\alpha-1}+\alpha-1\Big)}{2(\alpha+1)},\notag
\end{align}
where
\begin{align*}
 J_{a^{+}}^{\alpha} \tan(b)=\frac{1}{\Gamma(\alpha)}\int_a^b (x-t)^{\alpha-1}\tan(t)dt,
 \end{align*}
 and
\begin{align*}
J_{b^{-}}^{\alpha} \tan(a)= \frac{1}{\Gamma({\alpha})}\int_a^b(t-x)^{\alpha-1}\tan(t)dt.
\end{align*}
Now if in (\ref{ineq222}) we set $\alpha=1$, then we get
 \begin{align*}
 \bigg|\tan \Big(\dfrac{a+b}{2}\Big)- \dfrac{1}{b-a} \ln \frac{\sec b}{\sec a}\bigg|\leq b-a.
\end{align*}
\end{example}

\begin{remark}
\rm{(1)} For functions $f(x)=\frac{1}{x}$, $f(x)=e^x$ and $f(x)=-\ln x$, we can obtain some inequalities which generalize the corresponding inequalities obtained in Corollary 2.3 in \cite{dragomir3}.

\rm{(2)} Suppose that $f'$ is a Lipschitzian mapping with respect to $M_1$ and $f$ is a Lipschitzian mapping with respect to $M_2$. Comparing two inequalities (\ref{eq.--11}) and (\ref{ineq111}) implies that in the case $M_1<\frac{6M_2}{b-a}$, we have better estimation for mid-point type inequalities.

\rm{(3)} If $f'$ is an $M$-Lipschitzian mapping, then from inequality $\big||f'(x)|-|f'(y)|\big|\leq|f'(x)-f'(y)|$ we have $|f'|$ is Lipschitzian with respect to $M$. So in this case, we can replace $f$ in (\ref{eq.12}) with $|f'|$.
\end{remark}

\subsection{$|f'|$ is Convex}
Now we obtain Hermite-Hadamard's mid-point type inequality related to Katugampola fractional integrals for functions whose the absolute values of first derivative are convex. The Hermite-Hadamard's trapezoid type inequality of this kind is presented in Theorem \ref{thm.000}.
\begin{theorem}\label{thm.01}
Let $f:I\to \mathbb{R}$ be a differentiable function on $I^{\circ}$. For $0\leq a<b$ and $\rho>0$, suppose that $|f'|$ is convex and integrable on $[a^{\rho},b^{\rho}]$. Then in the case that $0<\alpha\rho\leq 1$, the following mid-point type inequality holds:
\begin{align*}
& \bigg|f \Big(\dfrac{a^{\rho}+b^{\rho}}{2}\Big)- \dfrac{\alpha{\rho}^{\alpha}\Gamma(\alpha +1)}{2(b^{\rho}-a^{\rho})^{\alpha}} \Big[{^\rho I_{a^{+}}^{\alpha}} f\big(b^{\rho})+{^\rho I_{b^{-}}^{\alpha}} f(a^\rho )\Big ] \bigg|\leq\dfrac{b^{\rho}-a^{\rho}}{2^{\alpha+1}(\alpha+1)}\Big(|f'(a^{\rho})|+|f'(b^{\rho})|\Big).
\end{align*}
\end{theorem}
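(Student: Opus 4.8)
The plan is to start from identity (\ref{eq.05}) of Lemma \ref{lemma01}, which already expresses the quantity appearing (without the absolute value) on the left of the asserted inequality as $\frac{\rho(b^{\rho}-a^{\rho})}{2}\big(\mathcal I_{1}+\mathcal I_{2}\big)$, where
\[
\mathcal I_{1}=\int_{0}^{\frac{1}{\sqrt[\rho]{2}}}t^{(\alpha+1)\rho-1}\,\Delta(t)\,dt,\qquad \mathcal I_{2}=\int_{\frac{1}{\sqrt[\rho]{2}}}^{1}(t^{\alpha\rho}-1)\,t^{\rho-1}\,\Delta(t)\,dt,
\]
with $\Delta(t)=f'\big(t^{\rho}b^{\rho}+(1-t^{\rho})a^{\rho}\big)-f'\big(t^{\rho}a^{\rho}+(1-t^{\rho})b^{\rho}\big)$; so it suffices to bound $\big|\mathcal I_{1}+\mathcal I_{2}\big|$.

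First I would transport $\mathcal I_{2}$ onto the interval $\big[0,\frac{1}{\sqrt[\rho]{2}}\big]$ via the substitution $s^{\rho}=1-t^{\rho}$. This map exchanges the endpoints $\frac{1}{\sqrt[\rho]{2}},1$ with $\frac{1}{\sqrt[\rho]{2}},0$, it turns $t^{\rho-1}\,dt$ into $-s^{\rho-1}\,ds$, and — the crucial point — it interchanges the two points $t^{\rho}a^{\rho}+(1-t^{\rho})b^{\rho}$ and $t^{\rho}b^{\rho}+(1-t^{\rho})a^{\rho}$, so that $\Delta$ is simply negated. As also $t^{\alpha\rho}-1=-(1-(1-s^{\rho})^{\alpha})$, the four sign changes (from $\Delta$, from this factor, from the measure, and from reversing the limits of integration) cancel in pairs, and carrying the substitution out should give $\mathcal I_{2}=\int_{0}^{\frac{1}{\sqrt[\rho]{2}}}\big(1-(1-t^{\rho})^{\alpha}\big)t^{\rho-1}\,\Delta(t)\,dt$. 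Hence
\[
\mathcal I_{1}+\mathcal I_{2}=\int_{0}^{\frac{1}{\sqrt[\rho]{2}}}\big[\,t^{\alpha\rho}+1-(1-t^{\rho})^{\alpha}\,\big]\,t^{\rho-1}\,\Delta(t)\,dt .
\]

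Next I would invoke the hypothesis $0<\alpha\rho\le1$ to control the new kernel: on $0\le t\le\frac{1}{\sqrt[\rho]{2}}$ one has $0\le t^{\rho}\le\frac12$, and the elementary inequality $(t^{\rho})^{\alpha}+(1-t^{\rho})^{\alpha}\ge1$ there gives $1-(1-t^{\rho})^{\alpha}\le t^{\alpha\rho}$, so the bracketed kernel is at most $2t^{\alpha\rho}$. Taking absolute values then yields
\[
\bigg|f\Big(\dfrac{a^{\rho}+b^{\rho}}{2}\Big)-\dfrac{\alpha\rho^{\alpha}\Gamma(\alpha+1)}{2(b^{\rho}-a^{\rho})^{\alpha}}\big[{^\rho I_{a^{+}}^{\alpha}}f(b^{\rho})+{^\rho I_{b^{-}}^{\alpha}}f(a^{\rho})\big]\bigg|\le\rho(b^{\rho}-a^{\rho})\int_{0}^{\frac{1}{\sqrt[\rho]{2}}}t^{(\alpha+1)\rho-1}\,|\Delta(t)|\,dt .
\]
Finally, because $t^{\rho}a^{\rho}+(1-t^{\rho})b^{\rho}$ and $t^{\rho}b^{\rho}+(1-t^{\rho})a^{\rho}$ are convex combinations of $a^{\rho},b^{\rho}$ with complementary weights, convexity of $|f'|$ gives $|\Delta(t)|\le|f'\big(t^{\rho}b^{\rho}+(1-t^{\rho})a^{\rho}\big)|+|f'\big(t^{\rho}a^{\rho}+(1-t^{\rho})b^{\rho}\big)|\le|f'(a^{\rho})|+|f'(b^{\rho})|$, and since $\int_{0}^{\frac{1}{\sqrt[\rho]{2}}}t^{(\alpha+1)\rho-1}\,dt=\frac{1}{2^{\alpha+1}(\alpha+1)\rho}$, the claimed constant follows.

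The hard part, I expect, will be the reduction of $\mathcal I_{2}$ together with the kernel estimate: one must check that $s^{\rho}=1-t^{\rho}$ genuinely swaps the two arguments of $f'$ (so that $|\Delta|$ is preserved), track the measure and orientation correctly, and then verify the pointwise bound $t^{\alpha\rho}+1-(1-t^{\rho})^{\alpha}\le2t^{\alpha\rho}$ on $\big[0,\frac{1}{\sqrt[\rho]{2}}\big]$ — equivalently $(t^{\rho})^{\alpha}+(1-t^{\rho})^{\alpha}\ge1$ there — which is exactly where the restriction carried by $\alpha\rho\le1$ is consumed. This detour is what secures the sharp constant: applying the triangle inequality directly to (\ref{eq.05}) would give only the weaker bound $\frac{(b^{\rho}-a^{\rho})\big(\alpha-1+2^{1-\alpha}\big)}{4(\alpha+1)}\big(|f'(a^{\rho})|+|f'(b^{\rho})|\big)$, which coincides with the asserted one solely when $\alpha=1$.
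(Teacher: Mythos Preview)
Your argument is correct and takes a genuinely different route from the paper's. The paper works from (\ref{eq.03}) and (\ref{eq.04}) separately, applies the triangle inequality to each, expands via the pointwise convexity bound $|f'(t^{\rho}x+(1-t^{\rho})y)|\le t^{\rho}|f'(x)|+(1-t^{\rho})|f'(y)|$, replaces $1-t^{\alpha\rho}$ on $[2^{-1/\rho},1]$ by $(1-t^{\rho})^{\alpha}$, and then evaluates eight explicit integrals that sum to the stated constant. You instead start from (\ref{eq.05}), reflect $\mathcal I_{2}$ onto $[0,2^{-1/\rho}]$ via $s^{\rho}=1-t^{\rho}$, merge the two kernels into $t^{\alpha\rho}+1-(1-t^{\rho})^{\alpha}$, bound this by $2t^{\alpha\rho}$, and are left with a single elementary integral. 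Your route is considerably shorter; both ultimately rest on the same pointwise inequality $x^{\alpha}+(1-x)^{\alpha}\ge1$, just invoked at different places.

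Two small corrections. First, that inequality (with $x=t^{\rho}\in[0,1]$) holds precisely when $0<\alpha\le1$; the parameter $\rho$ plays no role once you substitute $x=t^{\rho}$, so it is $\alpha\le1$, not $\alpha\rho\le1$, that is actually consumed. (The paper has the same mismatch: its statement carries $\alpha\rho\le1$ while its proof ends by invoking $0<\alpha\le1$.) Second, your closing remark has the comparison backwards: since $\dfrac{1}{2^{\alpha+1}(\alpha+1)}=\dfrac{2^{1-\alpha}}{4(\alpha+1)}$ and $\alpha-1<0$ for $\alpha<1$, the ``direct'' constant $\dfrac{\alpha-1+2^{1-\alpha}}{4(\alpha+1)}$ is strictly \emph{smaller} than the asserted one. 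Applying the triangle inequality directly to (\ref{eq.05}) would therefore have produced a sharper inequality than the theorem claims; your reflection detour is elegant but in this range it loses a little rather than securing sharpness.
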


\begin{proof} From (\ref{eq.03}) and (\ref{eq.04}) we have
\begin{align}
& \bigg|2f \Big(\dfrac{a^{\rho}+b^{\rho}}{2}\Big)- \dfrac{\alpha{\rho}^{\alpha}\Gamma(\alpha +1)}{(b^{\rho}-a^{\rho})^{\alpha}} \Big[{^\rho I_{a^{+}}^{\alpha}} f\big(b^{\rho})+{^\rho I_{b^{-}}^{\alpha}} f(a^\rho )\Big ] \bigg|\leq\notag\\
& \bigg|f\Big(\dfrac{a^{\rho}+b^{\rho}}{2}\Big)-\frac{\alpha{\rho}^{\alpha}\Gamma(\alpha +1)}{(b^{\rho}-a^{\rho})^{\alpha}}{^\rho I_{a^{+}}^{\alpha}} f(b^\rho )\bigg|+\bigg|f\Big(\dfrac{a^{\rho}+b^{\rho}}{2}\Big)-\frac{\alpha{\rho}^{\alpha}\Gamma(\alpha +1)}{(b^{\rho}-a^{\rho})^{\alpha}}{^\rho I_{b^{-}}^{\alpha}} f(a^\rho )\bigg|\leq\notag\\
&\rho (b^{\rho}-a^{\rho})\bigg\{\int_0^{\frac{1}{\sqrt[\rho]{2}}} t^{(\alpha+1)\rho-1} \big|f'\big(t^{\rho}b^{\rho}+(1-t^{\rho})a^{\rho}\big)\big|dt+\notag\\
&\int_0^{\frac{1}{\sqrt[\rho]{2}}} t^{(\alpha+1)\rho-1}\big|f'\big(t^{\rho}a^{\rho}+(1-t^{\rho})b^{\rho}\big)\big|dt+\int_{\frac{1}{\sqrt[\rho]{2}}}^1 (t^{\alpha\rho}-1)t^{\rho-1}\big|f'\big(t^{\rho}b^{\rho}+(1-t^{\rho})a^{\rho}\big)\big|dt+\notag\\
& \int_{\frac{1}{\sqrt[\rho]{2}}}^1 (t^{\alpha\rho}-1)t^{\rho-1}\big|f'\big(t^{\rho}a^{\rho}+(1-t^{\rho})b^{\rho}\big)\big|dt\notag\bigg\}\leq\\
& \rho (b^{\rho}-a^{\rho})\bigg\{\int_0^{\frac{1}{\sqrt[\rho]{2}}} t^{(\alpha+1)\rho-1}\big[ t^{\rho}|f'(b^{\rho})|+(1-t^{\rho})|f'(a^{\rho})|\big]dt+\notag\\
& \int_0^{\frac{1}{\sqrt[\rho]{2}}} t^{(\alpha+1)\rho-1}\big[ t^{\rho}|f'(a^{\rho})|+(1-t^{\rho})|f'(b^{\rho})|\big]dt+\notag\\
&\int_{\frac{1}{\sqrt[\rho]{2}}}^1 \big|t^{\alpha\rho}-1\big|t^{\rho-1}\big[ t^{\rho}|f'(b^{\rho})|+(1-t^{\rho})|f'(a^{\rho})|\big]dt+\label{eq..11}\\
&\int_{\frac{1}{\sqrt[\rho]{2}}}^1 \big|t^{\alpha\rho}-1\big|t^{\rho-1}\big[ t^{\rho}|f'(a^{\rho})|+(1-t^{\rho})|f'(b^{\rho})|\big]dt\bigg\}\label{eq..22}=\\
&\rho (b^{\rho}-a^{\rho})\bigg\{\frac{1}{2^{\alpha+2}(\alpha+2)\rho}|f'(b^{\rho})|+\frac{\alpha+3}{2^{\alpha+2}(\alpha+1)(\alpha+2)\rho}|f'(a^{\rho})|+\notag\\
&\frac{1}{2^{\alpha+2}(\alpha+2)\rho}|f'(a^{\rho})|+\frac{\alpha+3}{2^{\alpha+2}(\alpha+1)(\alpha+2)\rho}|f'(b^{\rho})|\notag+\\
&\frac{\alpha+3}{2^{\alpha+2}(\alpha+1)(\alpha+2)\rho}|f'(b^{\rho})|+\frac{1}{2^{\alpha+2}(\alpha+2)\rho}|f'(a^{\rho})|\notag+\\
&\frac{\alpha+3}{2^{\alpha+2}(\alpha+1)(\alpha+2)\rho}|f'(a^{\rho})|+\frac{1}{2^{\alpha+2}(\alpha+2)\rho}|f'(b^{\rho})|\bigg\}=\dfrac{b^{\rho}-
a^{\rho}}{2^{\alpha}(\alpha+1)}\Big(|f'(a^{\rho})|+|f'(b^{\rho})|\Big).\notag
\end{align}
Note that in calculations of integrals (\ref{eq..11}) and (\ref{eq..22}) we used the fact that $|1-t^{\alpha\rho}|\leq|1-t|^{\alpha\rho}$,  where $0<\alpha\leq 1$. Some other details are omitted.
\end{proof}

\begin{remark}
Theorem \ref{thm.01}, is a generalized form of Theorem 2 in \cite{iqbal} (consider $\rho=1$) and so is a generalization for Theorem \ref{kir} (consider $\alpha=\rho=1$).
\end{remark}


\section{Special Means}
In this section as an application of our results we obtain some generalized inequalities related to two well known special means:
\begin{align*}
&A(a,b)=\frac{a+b}{2}\qquad\qquad\qquad\qquad\qquad ~~~~~~~~~~~~~~~~arithmetic ~mean,\\
&L_n(a,b)=\Big[\frac{b^{n+1}-a^{n+1}}{(n+1)(b-a)}\Big]^{\frac{1}{n}}\qquad\qquad~generalized ~log\!-\! mean, ~n\in \mathbb{N}, ~a<b.
\end{align*}
In fact we give some generalized estimation type results for the difference of two means. For more concepts and results about special means, see \cite{DP} and references therein.\\
Consider $f(t)=t^n$ for $t\geq 0$, $n\in \mathbb{N}$. Now $M=nb^{(n-1)\rho}$, and so from Theorem \ref{thm.03} we have \\
\begin{align}\label{ineq.-17}
& \bigg|\Big(\dfrac{a^{\rho}+b^{\rho}}{2}\Big)^n- \dfrac{\alpha{\rho}^{\alpha}\Gamma(\alpha +1)}{(b^{\rho}-a^{\rho})^{\alpha}} \Big[\Big(\frac{1}{2}\Big)^{\alpha}{^\rho I_{a^{+}}^{\alpha}} \big(b^{n\rho}\big)+\Big(1-(\frac{1}{2})^{\alpha}\Big){^\rho I_{b^{-}}^{\alpha}}(a^{n\rho} )\Big ] \bigg|\leq\\
&\dfrac{nb^{(n-1)\rho}(b^{\rho}-a^{\rho})\Big((\frac{1}{2})^{\alpha-1}+\alpha-1\Big)}{2(\alpha+1)}.\notag
\end{align}
By using integration by parts for $n$ times we have
\begin{align}\label{ineq.--17}
&{^\rho I_{a^{+}}^{\alpha}} \big(b^{n\rho}\big)=\frac{\rho^{1-\alpha}}{\Gamma(\alpha)}\bigg[\frac{a^n(b^{\rho}-a^{\rho})^{\alpha}}{\alpha\rho}-\frac{b^{n\rho}(b^{\rho}-b^{\rho^2})^{\alpha}}{\alpha\rho}
+\frac{na^{n-\rho}(b^{\rho}-a^{\rho})^{\alpha+1}}{\alpha(\alpha+1)\rho^2}-\\
&\frac{nb^{(n-\rho)\rho}(b^{\rho}-b^{\rho^2})^{\alpha+1}}{\alpha(\alpha+1)\rho^2}+
\frac{n(n-\rho)a^{n-2\rho}(b^{\rho}-a^{\rho})^{\alpha+2}}{\alpha(\alpha+1)(\alpha+2)\rho^3}-
\frac{n(n-\rho)b^{(n-2\rho)\rho}(b^{\rho}-b^{\rho^2})^{\alpha+2}}{\alpha(\alpha+1)(\alpha+2)\rho^3}+\cdots+\notag\\
&\frac{a^{n-(n-1)\rho}(b^{\rho}-a^{\rho})^{\alpha+n-1}\prod_{i=0}^{n-2}(n-i\rho)}{\rho^n\prod_{i=0}^{n-1}(\alpha+i)}
-\frac{b^{(n-(n-1)\rho)\rho}(b^{\rho}-b^{{\rho}^2})^{\alpha+n-1}\prod_{i=0}^{n-2}(n-i\rho)}{\rho^n\prod_{i=0}^{n-1}(\alpha+i)}+\notag\\
&\frac{\prod_{i=0}^{n-1}(n-i\rho)}{\rho^n\prod_{i=0}^{n-1}(\alpha+i)}\int_a^{b^{\rho}}t^{(n-1)(1-\rho)}(b^{\rho}-t^{\rho})^{\alpha+n-1}dt\bigg].\notag
\end{align}
Also
\begin{align}\label{ineq.---17}
&{^\rho I_{b^{-}}^{\alpha}} \big(a^{n\rho}\big)=\frac{\rho^{1-\alpha}}{\Gamma(\alpha)}\bigg[\frac{b^n(b^{\rho}-a^{\rho})^{\alpha}}{\alpha\rho}-\frac{a^{n\rho}(a^{\rho^2}
-a^{\rho})^{\alpha}}{\alpha\rho}-\frac{nb^{n-\rho}(b^{\rho}-a^{\rho})^{\alpha+1}}{\alpha(\alpha+1)\rho^2}+\\
&\frac{na^{(n-\rho)\rho}(a^{\rho^2}-a^{\rho})^{\alpha+1}}{\alpha(\alpha+1)\rho^2}+
\frac{n(n-\rho)b^{n-2\rho}(b^{\rho}-a^{\rho})^{\alpha+2}}{\alpha(\alpha+1)(\alpha+2)\rho^3}-\frac{n(n-\rho)a^{(n-2\rho)\rho}(a^{\rho^2}
-a^{\rho})^{\alpha+2}}{\alpha(\alpha+1)(\alpha+2)\rho^3}-\cdots+\notag\\
&\frac{(-1)^{n-1}b^{n-(n-1)\rho}(b^{\rho}-a^{\rho})^{\alpha+n-1}\prod_{i=0}^{n-2}(n-i\rho)}{\rho^n\prod_{i=0}^{n-1}(\alpha+i)}-\notag\\
&\frac{(-1)^{n-1}a^{(n-(n-1)\rho)\rho}(a^{\rho^2}-a^{\rho})^{\alpha+n-1}\prod_{i=0}^{n-2}(n-i\rho)}{\rho^n\prod_{i=0}^{n-1}(\alpha+i)}+\notag\\
&\frac{(-1)^n\prod_{i=0}^{n-1}(n-i\rho)}{\rho^n\prod_{i=0}^{n-1}(\alpha+i)}\int_{a^{\rho}}^bt^{(n-1)(1-\rho)}(t^{\rho}-a^{\rho})^{\alpha+n-1}dt\bigg].\notag
\end{align}
Now letting $\rho\to 1$ in (\ref{ineq.--17}) and (\ref{ineq.---17}), along with some calculations, implies that:\\
\begin{align}\label{ineq.17'}
& \Bigg|\Big(\dfrac{a+b}{2}\Big)^n- \dfrac{\alpha^2}{(b-a)^{\alpha}} \bigg[\Big(\frac{1}{2}\Big)^{\alpha}\sum_{m=0}^n\frac{a^{n-m}(b-a)^{\alpha+m}P(n,m)}{\prod_{i=0}^m(\alpha+i)}+\\
&\Big(1-(\frac{1}{2})^{\alpha}\Big)\sum_{m=0}^n\frac{(-1)^mb^{n-m}(b-a)^{\alpha+m}P(n,m)}{\prod_{i=0}^m(\alpha+i)}\bigg ] \Bigg|\leq\dfrac{nb^{(n-1)}(b-a)\Big((\frac{1}{2})^{\alpha-1}+\alpha-1\Big)}{2(\alpha+1)}.\notag
\end{align}
where
\begin{align*}
\sum_{m=0}^n\frac{a^{n-m}(b-a)^{\alpha+m}P(n,m)}{\prod_{i=0}^m(\alpha+i)}=\Gamma(\alpha)J_{a^{+}}^{\alpha} (b^n) =\lim_{\rho\to 1}{^\rho I_{a^{+}}^{\alpha}} \big(b^{n\rho}\big),
\end{align*}
\begin{align*}
\sum_{m=0}^n\frac{(-1)^mb^{n-m}(b-a)^{\alpha+m}P(n,m)}{\prod_{i=0}^m(\alpha+i)}=\Gamma(\alpha) J_{b^{-}}^{\alpha} (a^n)=\lim_{\rho\to 1}{^\rho I_{b^{-}}^{\alpha}} \big(a^{n\rho}\big),
\end{align*}
and
\begin{align*}
P(n,m)=\frac{n!}{(n-m)!},
\end{align*}
which is the number of possible permutations of $k$ objects from a set of $n$.\\
In special case if we consider $\alpha=1$, then it is not hard to see that
\begin{align*}
J_{a^{+}}^{1} f(b^n)+ J_{b^{-}}^{1} f(a^n) =\frac{2(b^{n+1}-a^{n+1})}{n+1}.
\end{align*}
So from inequality (\ref{ineq.17'}) we obtain that
\begin{align}\label{ineq.-18}
& \bigg | A^n(a,b)-L_n^n(a,b)\bigg |\leq\dfrac{nb^{n-1}(b-a)}{4}.
\end{align}
So we conclude that inequalities (\ref{ineq.-17}) and (\ref{ineq.17'}) are generalization of inequality (\ref{ineq.-18}), which has been obtained in \cite{dragomir3}.\\
Also with similar argument as above, from Theorem \ref{thm.01} we have
\begin{align}\label{ineq.19}
& \bigg|\Big(\dfrac{a^{\rho}+b^{\rho}}{2}\Big)^n- \dfrac{\alpha{\rho}^{\alpha}\Gamma(\alpha +1)}{2(b^{\rho}-a^{\rho})^{\alpha}} \Big[{^\rho I_{a^{+}}^{\alpha}} \big(b^{n\rho}\big)+{^\rho I_{b^{-}}^{\alpha}}(a^{n\rho} ) ] \bigg|\leq\dfrac{n(b^{\rho}-a^{\rho})}{2^{\alpha+1}(\alpha+1)}\Big(a^{(n-1)\rho}+b^{(n-1)\rho}\Big),
\end{align}
and if $\rho\to 1$, then
\begin{align}\label{ineq.20}
& \Bigg|\Big(\dfrac{a+b}{2}\Big)^n- \dfrac{\alpha^2}{2(b-a)^{\alpha}}\sum_{m=0}^n\frac{[a^{n-m}+(-1)^mb^{n-m}](b-a)^{\alpha+m}P(n,m)}{\prod_{i=0}^m(\alpha+i)}\Bigg|
\leq\dfrac{n(b-a)(a^{n-1}+b^{n-1})}{2^{\alpha+1}(\alpha+1)}.
\end{align}
Now if in (\ref{ineq.20}) we consider $\alpha=1$, then we recapture inequality (3.1) in \cite{kirmaci}:
\begin{align}\label{ineq.21}
& \bigg | A^n(a,b)-L_n^n(a,b)\bigg |\leq\dfrac{n(b-a)}{4}A(a^{n-1},b^{n-1}),
\end{align}
showing that (\ref{ineq.19}) and (\ref{ineq.20}) generalize (\ref{ineq.21}).


\begin{thebibliography}{99}


\bibitem{chen} F. Chen, \textit{Extensions of the Hermite-Hadamard inequality for harmonically convex functions via fractional integrals},
Appl. Math. Comput. \textbf{268} (2015), 121--128.

\bibitem{katu3} H. Chen and U. N. Katugampola, \textit{Hermite-Hadamard and Hermite-Hadamard-Fej\'er type inequalities for generalized fractional integrals}, J. Math. Anal. Appl. \textbf{446} (2017), 1274--1291.


\bibitem{dragomir} S. S. Dragomir and R. P. Agarwal, \textit{Two inequalities for differentiable mappings and applications to special means
 of real numbers and to trapezoidal formula}, Appl. Math. Lett. \textbf{11} (1998), 91--95.

\bibitem{dragomir3} S. S. Dragomir, Y. J. Cho and S. S. Kim, \textit{Inequalities of Hadamard's type for Lipschitzian mappings and their applications}, J. Math. Anal. Appl. \textbf{245} (2000), 489--501.

\bibitem{DP} S. S. Dragomir and C. E. M. Pearce, \textit{Selected topics on Hermite-Hadamard inequalities and applications}, RGMIA Monographs, Victoria University, 2000.
 (ONLINE: http://ajmaa.org/RGMIA/monographs.php/)



\bibitem{goma} R. Gorenflo, F. Mainardi, \textit{Fractional calculus, integral and differential equations of fractional
order}, Springer Verlag, Wien (1997), 223--276.

\bibitem{hadamard} J. Hadamard, \textit{\'Etude sur les propri\`{e}t\'es des fonctions enti\'eres et en particulier d'une fontion consid\'er\'ee par Riemann}, J. Math. Pures. Appl. \textbf{58} (1893) 171--215.

\bibitem{hermite} C. Hermite, \textit{Sur deux limites d'une int\'egrale d\'efinie}, Mathesis, \textbf{3} (1883), 82--83.

\bibitem{iqbal}  M. Iqbal, M. Iqbal Bhatti and K. Nazeer, \textit{Generalization of inequalities analogous to Hermite-Hadamard inequality via fractional integrals}, Bull. Korean Math. Soc. \textbf{52}(3) (2015), 707--716.

\bibitem{iscan} \.{I}. \.{I}\c{s}can and S. Wu, \textit{Hermite-Hadamard type inequalities for harmonically convex functions via fractional integrals}, Appl. Math. Comput. \textbf{238} (2014), 237--244.




\bibitem{jleli} M. Jleli, D. O'Regan and B. Samet, \textit{On Hermite-Hadamard Type Inequalities via Generalized Fractional Integrals}, Turk. J.
Math. \textbf{40} (2016), 1221--1230.

\bibitem{katu1} U. N. Katugampola, \textit{New approach to a generalized fractional integral}, Appl. Math. Comput. \textbf{218}(3) (2011), 860--865.

\bibitem{katu2} U. N. Katugampola, \textit{New approach to generalized fractional derivatives}, Bull. Math. Anal. Appl. \textbf{6}(4) (2014), 1--15.

\bibitem{kilbas} A. A. Kilbas, H. M. Srivastava and J. J. Trujillo, \textit{Theory and Applications of Fractional Differential Equations}, Elsevier, Amsterdam, Netherlands, 2006.

\bibitem{kirmaci} U. S. Kirmaci, \textit{Inequalities for differentiable mappings and applications to special means
of real numbers and to midpoint formula}, Appl. Math. Comp. \textbf{147}(1) (2004), 137--146.

\bibitem{kirya} V. Kiryakova, \textit{Generalized fractional calculus and applications}, John Wiley \& Sons Inc., New York, 1994.

\bibitem{mitri} D. S. Mitrinovi\'c, I. B. Lackovi\'c, “Hermite and convexity”, Aequationes Math. \textbf{28} (1985) 229--232.

\bibitem{noor} M. A. Noor, K. I. Noor M. U. Awan, and S. Khan, \textit{Fractional Hermite-Hadamard inequalities for some new classes of
Godunova-Levin functions}, Appl. Math. Inf. Sci. \textbf{8}(6) (2014), 2865--2872.






\bibitem{robert} A. W. Robert and D. E. Varbeg, \textit{Convex Functions}, Academic Press, New York and London (1973).

\bibitem{rode} M. Rostamian Delavar and M. De La Sen, \textit{Hermite-Hadamard-Fej\'er Inequality Related to Generalized
Convex Functions via Fractional Integrals}, Journal of Mathematics, \textbf{2018} (2018), Article ID 5864091, 10 pages.

\bibitem{RD} M. Rostamian Delavar and S. S. Dragomir, \textit{Weighted trapezoidal inequalities related to the area balance of a function with applications}, Appl. Math. Comput. \textbf{340} (2019), 5--14.

\bibitem{samko} S. G. Samko, A. A. Kilbas and O. I. Marichev, \textit{Fractional Integrals and Derivatives. Theory and Applications}, Gordon and
Breach, Amsterdam, 1993.


\bibitem{sarikaya}  M. Z. Sarikaya, E. Set, H. Yaldiz, N. Ba\c{s}ak, \textit{Hermite-Hadamard's inequalities for fractional integrals and related fractional inequalities},
Math. Comput. Model. \textbf{57} (2013), 2403--2407.

\bibitem{set} E. Set, \.{I}. \.{I}\c{s}can, M. Z. Sarikaya and M. E. \"Ozdemir, \textit{On new inequalities of Hermite-Hadamard-Fej\'er type for convex functions
via fractional integrals}, Appl. Math. Comput. \textbf{259} (2015), 875--881.

\bibitem{wang} J. Wang and M. Fe\v{c}kan, \textit{Fractional Hermite-Hadamard Inequalities}, De Gruyter, Germany, 2018.

\end{thebibliography}
\end{document}